\colorlet{shadecolor}{black!05!white} 
\newcommand{\Reals}{\mathbb{R}}
\newcommand{\E}{\operatorname{E}}
\newcommand{\ind}{\mathbf{1}}
\newcommand{\rlim}{{\scriptstyle+}}
\newtheorem{theorem}{Theorem}[section]
\newtheorem{lemma}[theorem]{Lemma}
\newtheorem{remark}[theorem]{Remark}
\newtheorem{remarks}[theorem]{Remarks}
\newtheorem{corollary}[theorem]{Corollary}
\newtheorem{proposition}[theorem]{Proposition}
\newtheorem{example}[theorem]{Example}
\newtheorem{examples}[theorem]{Examples}
\newlength\savedwidth
\begin{document}  
\title{{On Uniqueness for some non-Lipschitz SDE}}
\author{Aureli Alabert
                               \\ 
           Department of Mathematics \\
           Universitat Autònoma de Barcelona \\  
           08193 Bellaterra, Catalonia \\  
           \url{Aureli.Alabert@uab.cat}
           \and
           Jorge A. León
           \\ 
           Department of Automatic Control \\
           Cinvestav-IPN \\  
           Apartado Postal 14-740 \\
           07000 M\'exico D.F, Mexico\\  
           \url{jleon@ctrl.cinvestav.mx}
}

\maketitle

\begin{abstract}  
We study the uniqueness in the path-by-path sense (i.e. $\omega$-by-$\omega$) of 
solutions to stochastic differential equations with additive noise and non-Lipschitz autonomous
drift. The notion of path-by-path solution involves considering a collection of ordinary differential equations
and is, in principle, weaker than that of a strong solution, since no adaptability condition is required.
We use results and ideas from the classical theory of ode's, together with probabilistic tools like
Girsanov's theorem, to establish the uniqueness property for some classes of noises, including
Brownian motion, and some drift functions not necessarily bounded nor continuous.

\par
\medskip
\textbf{Keywords:} stochastic differential equations, path-by-path uniqueness, 
ordinary differential equations, extremal solutions,  Brownian motion, Girsanov's theorem.
\par
\textbf{Mathematics Subject Classification (2010):} 60H10, 34A12, 60J65.
\end{abstract}  
  
\section{Introduction}

Consider the stochastic differential equation (sde)
\begin{equation}\label{eq-x0bW} 
  X_t=x_0+\int_0^t b(X_s)\,ds + W_t\ , \quad t\in[0,T]\ ,
\end{equation}
where $W$ is some noise process with continuous paths. That means, $W$ is a random variable defined
on some complete
probability space $(\Omega,{\cal F}, P)$ with values in the space $C({[0,T]})$ of real continuous functions
on $[0,T]$, endowed with its Borel $\sigma$-field. A canonical example is Brownian motion. The function
$b\colon\Reals\rightarrow\Reals$ is supposed to be measurable at least, and
$x_0$ is a given real number. We refer the reader to Karatzas and Shreve \cite{MR1121940} for the
concepts on stochastic processes that we use in this paper.

We recall that a \emph{strong solution} of the equation above is a 
stochastic process $X$, with measurable paths, adapted to the filtration 
generated by $W$, 
and such that, for every $t$, the random variable $X_t-x_0-\int_0^t b(X_s)\,ds$ is well defined 
and is equal to $W_t$ almost surely.
In fact, the form of the equation implies that $X$ must have also continuous paths, hence 
the processes $\{X_t-x_0-\int_0^t b(X_s)\,ds,\ t\in[0,T]\}$ and 
$\{W_t,\ t\in [0,T]\}$ will be indistinguishable, i.e.\ they will be equal as $C([0,T])$-valued random variables.
It makes sense also to speak about local solutions, where the process $X$ exists only up to some
(random) time $\tau$.

\emph{Uniqueness} of solutions for the sde's  (\ref{eq-x0bW}) (sometimes called \emph{strong uniqueness} or \emph{pathwise uniqueness})
means that given a probability space, a process with the law of $W$ defined on it, and the initial condition $x_0$,
two strong solutions are indistinguishable.
The classical existence and uniqueness result for sde of the type (\ref{eq-x0bW}) is
the following (see, e.g.\ \cite[Theorems 5.2.5 and 5.2.9]{MR1121940}): 
\begin{theorem}
  If $b$ is a Lipschitz function, 
  then there is a unique  strong solution to (\ref{eq-x0bW}).
  \end{theorem}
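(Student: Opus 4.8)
The plan is to exploit the additive structure of the noise to reduce everything to classical facts about ordinary differential equations, handled path-by-path, and then to check the adaptedness condition separately. Fix $\omega$ and write $w=W(\omega)\in C([0,T])$; the substitution $Y_t:=X_t-w(t)$ turns \eqref{eq-x0bW} into $Y_t=x_0+\int_0^t b(Y_s+w(s))\,ds$, that is, the integral form of the ode $\dot Y_t=g(t,Y_t)$ with $g(t,y):=b(y+w(t))$. Since $b$ is Lipschitz with some constant $L$, the field $g$ is continuous in $t$, Lipschitz in $y$ with the same constant $L$ uniformly in $t$, and of at most linear growth, $|g(t,y)|\le|b(0)|+L(|y|+\|w\|_\infty)$.

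For \textbf{existence}, apply the Picard--Lindel\"of theorem to $g$: the Lipschitz bound gives a unique local solution, and the linear growth bound rules out explosion (a Gronwall estimate on $|Y_t|$ gives an a priori bound on $[0,T]$), so the solution extends to the whole interval. Undoing the substitution produces, for each $\omega$, a continuous path solving \eqref{eq-x0bW}. To see that this yields a genuine \emph{strong} solution, run the Picard iteration globally: set $X^{(0)}_\cdot=x_0+W_\cdot$ and $X^{(n+1)}_t=x_0+\int_0^t b(X^{(n)}_s)\,ds+W_t$. By induction each $X^{(n)}_t$ is a measurable functional of $\{W_s:0\le s\le t\}$, hence adapted to the filtration generated by $W$; the usual contraction estimate (e.g.\ in the weighted norm $\sup_{t\le T}e^{-2Lt}|\,\cdot\,|$, or by iterating the Lipschitz inequality and summing the resulting series) shows $X^{(n)}\to X$ uniformly on $[0,T]$ for every $\omega$, so the limit $X$ is adapted and has continuous paths.

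For \textbf{uniqueness}, if $X^1,X^2$ are two solutions on the same space, set $D_t:=X^1_t-X^2_t$; subtracting the two equations the noise terms cancel and $D_t=\int_0^t\bigl(b(X^1_s)-b(X^2_s)\bigr)\,ds$, so $|D_t|\le L\int_0^t|D_s|\,ds$ for every $\omega$ and every $t$, and Gronwall's inequality forces $D\equiv0$. As both processes have continuous paths, they are indistinguishable.

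The genuinely substantive point is \textbf{global} rather than merely local existence, together with the uniform-in-$[0,T]$ control of the contraction and Gronwall constants: one must use that the Lipschitz hypothesis entails linear growth, so that no finite-time blow-up occurs and a single fixed-point argument closes on the whole interval. The ode reduction, the measurability bookkeeping needed for adaptedness, and the Gronwall uniqueness step are otherwise routine.
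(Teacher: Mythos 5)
Your proof is correct and follows exactly the route the paper itself indicates for the Lipschitz case (it cites Karatzas--Shreve but explains in the introduction that existence of a strong solution comes from the Picard iteration scheme and uniqueness from Gronwall's lemma): the additive noise lets you cancel $W$ in the uniqueness step and run the whole argument $\omega$-by-$\omega$, with adaptedness read off from the iterates. The only substantive checks --- linear growth preventing blow-up, and measurability of the iterates as functionals of $\{W_s,\ s\le t\}$ --- are both handled adequately.
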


Existence and uniqueness of a strong solution can be proved under much weaker conditions on $b$, 
at least for the case of a Brownian motion $W$. Indeed, it was shown by Veretennikov \cite{MR568986} that it is enough 
that $b$ be bounded and measurable, also under some non-additive noises.  
This type of result was extended to parabolic differential equations in one space dimension 
driven by a space-time white noise by Bally, Gyöngy and Pardoux \cite{MR1266318}, Gyöngy \cite{MR1608641}
and Alabert and Gyöngy \cite{MR1811748}. In the latter, as well as in Gyöngy and Martínez \cite{MR1864041} in $\Reals^d$, 
the drift $b$ is allowed to be locally unbounded, provided a suitable integrability condition holds. 
We refer the reader to Flandoli \cite[Chapter 2]{MR2796837} for a more complete discussion on the topic.
For processes other than 
Brownian Motion, we can mention Nualart and Ouknine \cite{MR1934157}, \cite{MR2069689}. 
We cite also Catellier and Gubinelli \cite{arXiv:1205.1735v1},  where a slightly different problem is considered:
The coefficient $b$ is generalized to non-functions, that means, to distributional fields, leading to 
delicate problems about the meaning of the composition $b(X)$ and the definition of solution itself.

Now we introduce an ordinary differential equation similar to (\ref{eq-x0bW}): 
Given a real continuous function $\omega\in C([0,T])$, we may write
\begin{equation}\label{eq-x0bw}
  x_t=x_0+\int_0^t b(x_s)\,ds + \omega_t\ ,\quad t\in[0,T]\ .
\end{equation}

If $b$ is a function for which the existence of a strong solution of the related sde (\ref{eq-x0bW}) 
has been stated, one can say
immediately that there exists a solution to (\ref{eq-x0bw}) for almost all continuous functions $\omega$ with respect
to the law of $W$. Nothing can be said of any particular $\omega$, however. 

Assume, on the other hand, that we could prove the existence of a solution to (\ref{eq-x0bw}) for a certain class 
of functions $\omega$ having probability one with respect to the law of $W$. Would this yield an existence theorem for the sde?
This is not clear, since the condition of adaptability in the definition of strong solution need not be satisfied, in principle.

According to Flandoli \cite{MR2796837}, we will call \emph{path-by-path solution} of the sde (\ref{eq-x0bW}) a solution
obtained by solving $\omega$-by-$\omega$ the corresponding class of ode's (\ref{eq-x0bw}). Existence of a strong solution implies existence of a 
path-by-path solution,
but the converse is not known to be true, in general.     
Similarly, uniqueness of the
path-by-path solution does imply uniqueness in the strong sense, but not the other way round.

We ask ourselves if this gap can always be closed or, on the contrary, if it is possible to find counterexamples. This seems to
be a difficult problem. 
Notice that in the classical case ($b$ Lipschitz), it is true that a path-by-path solution is also strong. This
is due to the Picard iteration scheme, which implies the existence 
of a strong solution, and to Gronwall's lemma, which gives the uniqueness in the path-by-path sense. 
Therefore, the question concerns only the
non-Lipschitz cases.

We insist in the fact that establishing existence and uniqueness for a fixed particular $\omega\in C([0,T])$ is 
a different problem.  
For example, if 
$b(x)=\sqrt{|x|}$, $x_0=0$ and 
$\omega\equiv0$, it is easy to see that the equation has exactly two local solutions (infinitely many, in a global sense), 
namely $x\equiv 0$ and $x_t=t^2/4$.
However, the corresponding stochastic equation with a Brownian Motion $W$ has a unique strong solution (see, e.g.\ \cite{MR1864041}).
Our results in Section \ref{sec-gen-noise} show in particular that the solution is unique also in the path-by-path sense.

Concerning uniqueness of path-by-path solutions, we only know the works of Davie \cite{MR2377011, MR2789085} 
and the remarks on them made by Flandoli \cite{MR2836529}. In \cite{MR2377011} 
it is proved, by means of an estimate quite complicated to obtain, that for a bounded measurable function $b$ 
there is a unique solution to (\ref{eq-x0bw}), for a class of continuous functions
$\omega$ which has probability one with respect to the law of Brownian Motion. Hence, the solution to the corresponding
sde, which was already known to exist in the strong sense, is not only strongly unique, but also path-by-path unique. In \cite{MR2789085} a diffusion
coefficient is introduced, and the equation interpreted in the rough path sense. 
We provide a simpler proof of the path-by-path uniqueness in cases where $b$ is not necessarily bounded or continuous;
however, we have to restrict ourselves to dimension one, whereas in \cite{MR2377011, MR2789085} the equations are $d$-dimensional 
and the function $b$ may also depend on time. 

In this paper we apply some ideas from the theory of ordinary
differential equations to study the path-by-path uniqueness of equation (\ref{eq-x0bW}). 
Existence theorems are very general (e.g.\ Peano and Carathéodory theorems, that can be found 
in classical books like Hartman \cite{MR0171038}); 
however, uniqueness (and non-uniqueness)
results are poor and fragmented in comparison,  and particularly scarce for equations of the form (\ref{eq-x0bw}) (see, for instance,
the book by Agarwal and Lakshmikantham \cite{MR1336820}, dedicated to the subject). 

The paper is organized as follows. In Section \ref{sec-gen-noise} we use an extension of Iyanaga's uniqueness theorem (Theorem \ref{the:3.2n}) 
for ode's, and Girsanov's theorem, to establish our main
result: the path-by-path uniqueness of equation (\ref{eq-x0bW}) for a 
Brownian motion $W$. Next, we see that
the hypotheses on $b$ can be relaxed if the noise has a constant sign, leading 
to similar theorems for the absolute value of the Brownian motion $|W_t|$ and for $-|W_t|$. In Section 3 we consider the 
particular case of the square root:  
Using Lakshmikantham's theorem (Lemma \ref{LakshUT}), we obtain a simpler proof when $b(x)=\sqrt{|x|}$ and the noise is non-negative; moreover, 
a discontinuous version of the 
square root exemplifies that continuity is not essential for the techniques of Section \ref{sec-gen-noise} to work.
Finally, in Section 4 we use the idea behind the proof of Peano's uniqueness theorem to deal with some differentiable noises.

\section{Main results}\label{sec-gen-noise}

Let $\omega\colon[0,T]\rightarrow\Reals$ be a fixed continuous function, with $\omega_0=0$,
 and $b\colon\Reals\rightarrow\Reals$ a measurable function. Consider the 
equation

\begin{equation}\label{eq-bw} 
   x_t=\int_0^t b(x_s)\,ds + \omega_t\ , \quad t\in[0,T]\ .
\end{equation}

Taking $y_t:=x_t-\omega_t$ as a new unknown function, (\ref{eq-bw}) is equivalent to
\begin{equation}\label{eq-bw2} 
   y_t=\int_0^t b(y_s+\omega_s)\,ds\ , \quad t\in[0,T]\ .
\end{equation}

Let us assume that this equation have at least one continuous solution 
$y\colon[0,T]\longrightarrow\Reals$, which is true by the Peano 
existence theorem if $b$ is continuous (see, for instance, Lakshmikantham and Leela
\cite[Theorem 1.1.2]{MR0379933}).

We will find some sufficient conditions on $\omega$ 
ensuring the uniqueness of that solution. Towards this end,
consider the following set of hypotheses on function $b$:
\begin{enumerate}[H1.]
  \item
  $b(0)=0$. 
  \item 
  $b$ is non-decreasing on $(0,\infty)$. 
  \item 
  $b$ is continuous on $[0,\infty)$, and of class $C^1$ with $b'$ non-increasing on $(0,\infty)$. 
  \item
  $b(|x|)\le b(-|x|)$.
  \item
  $b$ is non-increasing on $(-\infty,0]$.
\end{enumerate}
Notice that, under these hypotheses,
any solution $y$ of (\ref{eq-bw2}) is non-negative and non-decreasing. 
We will make use of the following two lemmas:

\begin{lemma}\label{lem-des-iyana}
   Assume hypotheses H1-H5 hold true. Then, 
  for any two solutions $y$ and $\bar y$ of equation (\ref{eq-bw2}), such that $y\le \bar y$, and for any continuous
  function $\omega$ with $\omega_0=0$ and not identically zero on  the interval $[0,T]$, we have the inequality
  \begin{align}
    b(\bar y_t+\omega_t)-b(y_t+\omega_t)
    &\le
    (\bar y_t-y_t)
    \cdot
    b'(|y_t+\omega_t|\rlim)
    \label{eq-des1}
    \\
    &\le 
    (\bar y_t-y_t)
    \cdot
    \Big[
    \ind_{\{\omega_t\ge 0\}}
    \cdot
    b'\Big(\Big(\omega_t+\int_0^t\ind_{\{\omega_s>0\}}b(\omega_s)\,ds\Big)\rlim\Big)
    \nonumber
    \\
    &\phantom{\le 
    (\bar y_t-y_t)
    \cdot
    \Big[}
    +
    \ind_{\{\omega_t<0\}}
    \cdot
    b'(|y_t+\omega_t|\rlim)
    \Big]  \ , 
    \quad t\in[0,T] \ ,
    \nonumber
  \end{align}
  where $b'(z\rlim)$ means $\lim_{x\downarrow z} b'(x)$, and $b'(0\rlim)$ may be infinite.
\end{lemma}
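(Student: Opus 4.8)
The plan is to prove the two estimates separately and pointwise in $t$, by a case analysis on the signs of $u:=y_t+\omega_t$ and $v:=\bar y_t+\omega_t$. Observe that $v-u=\bar y_t-y_t\ge 0$, and that, as already noted before the lemma, every solution of (\ref{eq-bw2}) is non-negative, so $y_t,\bar y_t\ge 0$. I would rely on two structural facts. First, by H2--H3 the function $b$ is concave and non-decreasing on $[0,\infty)$, with $b'$ non-increasing, non-negative and continuous on $(0,\infty)$; hence $b'(z\rlim)=b'(z)$ for $z>0$, $b'(0\rlim)$ is well defined (possibly $+\infty$), and the supporting-line inequality $b(v)\le b(w)+b'(w\rlim)(v-w)$ holds for all $w,v\ge 0$. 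Second, by H1, H4, H5 we have $b\ge 0$ on $(-\infty,0]$, $b(0)=0$, and $b(|x|)\le b(-|x|)$ for every $x$.

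For (\ref{eq-des1}) there are three cases. If $u\ge 0$, then $v\ge u\ge 0$ and the supporting-line inequality at $w=u$ gives $b(v)-b(u)\le b'(u\rlim)(v-u)=b'(|u|\rlim)(v-u)$ (trivially true if $b'(0\rlim)=+\infty$). If $u<0$ and $v\le 0$, then $b(v)-b(u)\le 0$ by H5, while $b'(|u|\rlim)(v-u)\ge 0$ because $|u|>0$ and $v-u\ge 0$. If $u<0<v$, I would split $b(v)-b(u)=\bigl(b(v)-b(|u|)\bigr)+\bigl(b(|u|)-b(u)\bigr)$: the second summand is $\le 0$ by H4 applied to $x=u$, and the first is $\le b'(|u|\rlim)(v-|u|)$ by the supporting-line inequality at $w=|u|>0$ (valid for all $v\ge 0$). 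Since $|u|=-u$ gives $v-|u|=v+u\le v-u$ and $b'(|u|\rlim)\ge 0$, this again yields $b(v)-b(u)\le b'(|u|\rlim)(v-u)$, which establishes (\ref{eq-des1}).

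For the second estimate, on $\{\omega_t<0\}$ the two sides coincide, so fix $t$ with $\omega_t\ge 0$ and put $\beta_t:=\omega_t+\int_0^t\ind_{\{\omega_s>0\}}b(\omega_s)\,ds$. Then $|y_t+\omega_t|=y_t+\omega_t$ (as $y_t,\omega_t\ge 0$) and $\beta_t\ge 0$, since $b(\omega_s)\ge 0$ whenever $\omega_s>0$. By monotonicity and continuity of $b'$ on $(0,\infty)$ it is enough to show $y_t+\omega_t\ge\beta_t$, i.e. $y_t\ge\int_0^t\ind_{\{\omega_s>0\}}b(\omega_s)\,ds$; then indeed $b'((y_t+\omega_t)\rlim)\le b'(\beta_t\rlim)$, the case $\beta_t=0$ being trivial because $b'(z\rlim)\le b'(0\rlim)$ for every $z\ge 0$. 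Using $y_t=\int_0^t b(y_s+\omega_s)\,ds$, this reduces to the pointwise comparison $b(y_s+\omega_s)\ge\ind_{\{\omega_s>0\}}b(\omega_s)$: when $\omega_s>0$ it holds because $y_s+\omega_s\ge\omega_s>0$ and $b$ is non-decreasing on $(0,\infty)$; when $\omega_s\le 0$ the right-hand side is $0$ and $b(y_s+\omega_s)\ge 0$ by the second structural fact, whatever the sign of $y_s+\omega_s$.

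The computations involved (the supporting-line estimates for concave functions and the comparison of the integrands) are routine. I expect the only delicate point to be the mixed-sign case $u<0<v$ of (\ref{eq-des1}): one must ``pass through the origin'' by combining the concavity bound on $[0,\infty)$ (H3) with the cross-sign inequality H4, and then notice that the supporting line at $|u|$ produces $v-|u|=v+u$, which has to be enlarged to $v-u$ using $b'(|u|\rlim)\ge 0$. The hypothesis that $\omega$ is not identically zero is not used for these inequalities; it enters only afterwards, when they are combined into a uniqueness statement.
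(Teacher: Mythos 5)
Your proof is correct and follows essentially the same route as the paper's: mean-value/concavity estimates for $b$ on the positive half-line for the first bound, H4 to cross the origin, H5 when both arguments are negative, and the pointwise comparison $b(y_s+\omega_s)\ge \ind_{\{\omega_s>0\}}b(\omega_s)$ to pass to the second bound on $\{\omega_t\ge 0\}$. Your reorganization by the signs of $y_t+\omega_t$ and $\bar y_t+\omega_t$ simply merges the paper's Cases 3 and 4 into a single mixed-sign case via the global tangent-line inequality for concave functions, which is a cosmetic (if slightly cleaner) difference.
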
 
  \emph{Remark:} We write right-limits 
only because $b'$ is not necessarily defined
at zero.

\begin{proof} 
The proof will be divided into several cases. 
  \begin{description}
    \item [{\sc Case 1:}  \textmd{$t$ such that $\omega_t\ge0$}].
 
 By the mean value theorem, and using that $b'$ is non-increasing on the positive axis,
       \begin{equation*}
         b(\bar y_t+\omega_t)-b(y_t+\omega_t)
         \le
         (\bar y_t-y_t)\cdot b'((y_t+\omega_t)\rlim)
         \ ,
       \end{equation*}
      which, together with (\ref{eq-bw2}), implies
       \begin{align*}
         b(\bar y_t+\omega_t)-b(y_t+\omega_t)
         &\le 
         (\bar y_t-y_t)\cdot
         b'\Big(\Big(\omega_t+\int_0^t b(y_s+\omega_s)\,ds\Big)\rlim\Big)
         \\ & \le 
         (\bar y_t-y_t)\cdot
         b'\Big(\Big(\omega_t+\int_0^t \ind_{\{\omega_s>0\}}b(y_s+\omega_s)\,ds\Big)\rlim\Big)
         \\ & \le 
         (\bar y_t-y_t)\cdot
         b'\Big(\Big(\omega_t+\int_0^t \ind_{\{\omega_s>0\}}b(\omega_s)\,ds\Big)\rlim\Big)
         \ .
       \end{align*}
       
    \item [{\sc Case 2:}  \textmd{$t$ such that $-y_t\le\omega_t< 0$}].

   Similar to the case 1, the mean value theorem gives
 \begin{equation*}
         b(\bar y_t+\omega_t)-b(y_t+\omega_t)
         \le
         (\bar y_t-y_t)
         \cdot b'(|y_t+\omega_t|\rlim) 
         \ .
       \end{equation*}
   \item [{\sc Case 3:}  \textmd{$t$ such that $-\frac{y_t+\bar y_t}{2}\le\omega_t< -y_t$}].
   
   We have $\bar y_t+\omega_t\ge-y_t-\omega_t>0$. Therefore,
       \begin{align*}
         b(\bar y_t+\omega_t)-b(y_t+\omega_t)
         &\le
         b(\bar y_t+\omega_t)-b(-y_t-\omega_t)
         \\ &\le
         (\bar y_t+y_t+2\omega_t)
         \cdot 
         b'(|y_t+\omega_t|\rlim)
         \le
         (\bar y_t-y_t)
         \cdot 
         b'(|y_t+\omega_t|\rlim)
         \ ,
       \end{align*}
       where we have used hypothesis H4 in the first inequality. 
   \item [{\sc Case 4:}  \textmd{$t$ such that $-\bar y_t\le\omega_t<-\frac{y_t+\bar y_t}{2}$}].
   
   Now,  $y_t+\omega_t<-\bar y_t-\omega_t\le0$. Using again hypothesis H4, 
       \begin{equation*}
         b(\bar y_t+\omega_t)-b(y_t+\omega_t)
         \le
         b(\bar y_t+\omega_t)-b(-y_t-\omega_t)
         \le
         0
         \ .
       \end{equation*}
   \item [{\sc Case 5:}  \textmd{$t$ such that $\omega_t<-\bar y_t$}].
   
   Here, we have $y_t+\omega_t\le \bar y_t+\omega_t< 0$. Hence, by H5,
       \begin{equation*}
         b(\bar y_t+\omega_t)-b(y_t+\omega_t)
         \le
         0
         \ .
       \end{equation*}
      
  \end{description}
\end{proof}

\begin{lemma}\label{Pachpatte}
Let $f, g, h\colon [0, T] \rightarrow \Reals$ be continuous functions, and 
$k\colon [0, T] \times \Reals \rightarrow \Reals$ measurable.
Assume 
\begin{equation*}
f(t) \le h(t) + \int_0^t k(s,f(s))\,ds 
\quad\text{and}\quad  
g(t) \ge h(t) + \int_0^t k(s,g(s))\, ds
\ ,\quad 
t\in[0,T]
\ , 
\end{equation*}
and that $k$ is non-decreasing in the second variable and 
\begin{equation*}
k(t,\bar y)-k(t, y) \le a(t)(\bar y-y)
\ ,\quad \text{for $y,\bar y\in\Reals$, $y\le\bar y$}   
\ .
\end{equation*}
for some integrable function $a\colon [0,T]\rightarrow \Reals$.

 Then $f(t) \le g(t)$ for all $t\in[0,T]$. 
\end{lemma}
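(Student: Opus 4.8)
The plan is to reduce the statement to a one-dimensional Gronwall-type argument applied to the positive part of $f-g$. First I would put $\phi:=f-g$, which is continuous on $[0,T]$, and subtract the two integral inequalities to obtain
\begin{equation*}
  \phi(t)\le\int_0^t\bigl[k(s,f(s))-k(s,g(s))\bigr]\,ds\ ,\qquad t\in[0,T]\ ;
\end{equation*}
this manipulation is legitimate because the hypotheses implicitly require $s\mapsto k(s,f(s))$ and $s\mapsto k(s,g(s))$ to be integrable on $[0,T]$ (they are measurable, being compositions of the measurable $k$ with the continuous maps $s\mapsto(s,f(s))$ and $s\mapsto(s,g(s))$). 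I would also record that the one-sided Lipschitz bound, combined with the monotonicity of $k$ in its second argument, forces $a\ge0$ a.e.: for $y<\bar y$ one has $0\le k(t,\bar y)-k(t,y)\le a(t)(\bar y-y)$.

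Next I would bound the integrand pointwise by $a(s)\,\phi^+(s)$, where $\phi^+:=\max(\phi,0)$. On the set $\{s:f(s)>g(s)\}$ the one-sided Lipschitz hypothesis gives $k(s,f(s))-k(s,g(s))\le a(s)\bigl(f(s)-g(s)\bigr)=a(s)\,\phi^+(s)$, while on $\{s:f(s)\le g(s)\}$ the monotonicity of $k$ gives $k(s,f(s))-k(s,g(s))\le0=a(s)\,\phi^+(s)$. Hence
\begin{equation*}
  \phi(t)\le\int_0^t a(s)\,\phi^+(s)\,ds\ ,\qquad t\in[0,T]\ ,
\end{equation*}
and since $a\ge0$ and $\phi^+\ge0$ the right-hand side is non-negative, so the same inequality holds with $\phi$ replaced by $\phi^+$ on the left.

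Finally I would invoke Gronwall's lemma in its integral form for integrable kernels: as the inhomogeneous term vanishes it yields $\phi^+(t)\le0$, hence $\phi^+\equiv0$, i.e.\ $f(t)\le g(t)$ on $[0,T]$. (Alternatively, iterating $\phi^+(t)\le\int_0^t a(s)\phi^+(s)\,ds$ gives $\phi^+(t)\le\|\phi^+\|_\infty\bigl(\int_0^T a\bigr)^n/n!$ for every $n$, and letting $n\to\infty$ finishes the proof without naming Gronwall.) I do not expect a genuine obstacle here; the only points requiring a little care are the integrability of the relevant integrands — needed in order to subtract the two inequalities and to integrate the pointwise bound — and the remark that $a\ge0$, which is precisely what allows passing from the estimate on $\phi$ to the estimate on $\phi^+$.
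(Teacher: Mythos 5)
Your proof is correct. Note that the paper does not actually prove this lemma: it simply cites Pachpatte \cite[Theorem 2.2.5]{MR1487077}, so your argument supplies a self-contained derivation where the paper defers to a reference. Your route is the natural one: subtract the two integral inequalities, observe that monotonicity of $k$ in the second variable together with the one-sided Lipschitz bound forces $a\ge 0$, bound the integrand by $a(s)\,\phi^+(s)$ by splitting according to the sign of $\phi=f-g$ (monotonicity handles the set where $\phi\le 0$, the one-sided Lipschitz condition the set where $\phi>0$), pass from $\phi$ to $\phi^+$ using the non-negativity of the right-hand side, and finish with Gronwall for an integrable kernel (or the Picard iteration $\phi^+(t)\le\|\phi^+\|_\infty\bigl(\int_0^T a\bigr)^n/n!$). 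All the delicate points are addressed: the integrability of $s\mapsto k(s,f(s))$ and $s\mapsto k(s,g(s))$ needed to subtract the inequalities is indeed implicit in the hypotheses, $a\ge 0$ is genuinely needed both to replace $\phi$ by $\phi^+$ and to run the iteration, and $\|\phi^+\|_\infty<\infty$ holds since $\phi$ is continuous on the compact interval $[0,T]$. What your approach buys is transparency — it shows exactly which hypothesis is used where — at the cost of a page of routine estimates that the authors chose to outsource.
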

\begin{proof}
  See Pachpatte \cite[Theorem 2.2.5]{MR1487077}.
\end{proof}

\begin{theorem} \label{the:3.2n}
Let $b$ satisfy hypotheses H1-H5 and $y$ be a solution of (\ref{eq-bw2}). Assume that the function
\begin{equation}\label{cond:1}
a(t)=\Big[
    \ind_{\{\omega_t\ge 0\}}
    \cdot
    b'\Big(\Big(\omega_t+\int_0^t\ind_{\{\omega_s>0\}}b(\omega_s)\,ds\Big)\rlim\Big)
    +
    \ind_{\{\omega_t<0\}}
    \cdot
    b'(|y_t+\omega_t|\rlim)
     \Big]
\end{equation}
belongs to $L^1([0,T])$. Let $\bar y$ be another solution, with $y\le \bar y$. Then, $y=\bar y$.
\end{theorem}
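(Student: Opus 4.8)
The plan is to subtract the two integral equations, bound the resulting integrand by means of Lemma \ref{lem-des-iyana}, and then close the estimate with the comparison Lemma \ref{Pachpatte} applied to a \emph{linear} kernel.

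First I would set $z_t:=\bar y_t-y_t$, which is a non-negative continuous function with $z_0=0$, and subtract the two instances of (\ref{eq-bw2}) satisfied by $\bar y$ and $y$:
\[
  z_t=\int_0^t\big(b(\bar y_s+\omega_s)-b(y_s+\omega_s)\big)\,ds\ ,\qquad t\in[0,T]\ .
\]
By Lemma \ref{lem-des-iyana} the integrand is bounded above by $(\bar y_s-y_s)\,a(s)=z_s\,a(s)$, with $a$ the function in (\ref{cond:1}) (the bracket there being exactly the right-hand side of the second inequality of that lemma). Hence $z_t\le\int_0^t a(s)\,z_s\,ds$ for all $t\in[0,T]$. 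One degenerate case deserves a remark: Lemma \ref{lem-des-iyana} is stated only for $\omega$ not identically zero, but if $\omega\equiv0$ then $a(t)\equiv b'(0\rlim)$, which is finite under the standing assumption $a\in L^1([0,T])$, and the same bound then follows from the mean value theorem together with the monotonicity of $b'$ in H3; moreover the classical non-uniqueness examples with $\omega\equiv0$, such as $b(x)=\sqrt{|x|}$, are automatically excluded here because for them $a\notin L^1$.

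Next I would verify that $a\ge0$, which is what allows Lemma \ref{Pachpatte} to be applied with the linear kernel $k(t,u):=a(t)\,u$. Indeed $b'\ge0$ on $(0,\infty)$ by H2 and H3, and $b\ge0$ on $[0,\infty)$ by H1, H2 and H3; consequently the two arguments at which $b'$ is evaluated in (\ref{cond:1}) --- namely $\omega_t+\int_0^t\ind_{\{\omega_s>0\}}b(\omega_s)\,ds$ on $\{\omega_t\ge0\}$ and $|y_t+\omega_t|$ on $\{\omega_t<0\}$ --- are non-negative, so $a(t)\ge0$ for every $t$ (after replacing $a$ by $0$ on the Lebesgue-null set where it equals $+\infty$, which alters neither the integral inequality nor the conclusion, we may take $a$ real-valued). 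Now put $f:=z$, $g\equiv0$, $h\equiv0$ and $k(t,u)=a(t)\,u$. Then $k$ is measurable, non-decreasing in $u$ because $a\ge0$, satisfies $k(t,\bar u)-k(t,u)=a(t)(\bar u-u)$ for $u\le\bar u$ with $a\in L^1([0,T])$, and the inequality just obtained reads $f(t)\le h(t)+\int_0^t k(s,f(s))\,ds$, while $g$ trivially satisfies $g(t)\ge h(t)+\int_0^t k(s,g(s))\,ds$. Lemma \ref{Pachpatte} then gives $z_t=f(t)\le g(t)=0$ on $[0,T]$; since $z\ge0$, we conclude $z\equiv0$, that is, $y=\bar y$.

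I expect the only substantive step to be the first estimate, $b(\bar y_s+\omega_s)-b(y_s+\omega_s)\le z_s\,a(s)$: this is precisely Lemma \ref{lem-des-iyana}, and it is there that all of H1-H5 are consumed and the integrable ``Lipschitz-type coefficient'' $a$ is produced. Once it is available, the remainder is a standard Gronwall argument repackaged through Lemma \ref{Pachpatte}, the only care required being the bookkeeping around the possibly infinite right-hand derivative $b'(0\rlim)$, which the hypothesis $a\in L^1([0,T])$ is exactly designed to keep under control.
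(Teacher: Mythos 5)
Your proof is correct and follows essentially the same route as the paper: Lemma \ref{lem-des-iyana} supplies the linear integral inequality $\bar y_t-y_t\le\int_0^t a(s)(\bar y_s-y_s)\,ds$, and Lemma \ref{Pachpatte} with the kernel $k(t,u)=a(t)u$ closes the estimate. The only difference is cosmetic: you apply Lemma \ref{Pachpatte} directly with $g\equiv h\equiv 0$ to conclude $\bar y-y\le 0$ outright, whereas the paper reaches the same conclusion by contradiction through an auxiliary solution of the linear equation; your extra checks (that $a\ge 0$, and the degenerate case $\omega\equiv 0$) are sound.
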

\begin{remarks}\label{rem:g}\hspace{1pt}
\begin{enumerate}
  \item 
  If $b$ is continuous, so that maximal and minimal solutions exist, one may say that a solution $y$ satisfying
  Condition (\ref{cond:1}) is maximal. And that, if the minimal solution satisfies (\ref{cond:1}), then
  the solution to (\ref{eq-bw2}) is unique.
  \item 
This result could be seen as an extension of Iyanaga's uniqueness theorem, where the 
function $a(t)$ was assumed to be continuous. For details see \cite[Theorem 1.13.1]{MR1336820}.
  \item 
   We could replace H3-H5 by the existence of
  a measurable non-negative function $g$ such that inequality 
  (\ref{eq-des1}) holds with $g(z)$ in place of $b'(z\rlim)$, and write $g(|y_t+\omega_t|)$
  in place of $a(t)$ in (\ref{cond:1}).
\end{enumerate}

\end{remarks}
\begin{proof} 
Set $\phi={\bar y}-y$ and $k(t,z)=a(t)z$, for $t\in[0,T]$ and $z\in\Reals$.
 Suppose that there is $t_1\in(0,T)$ such that $\phi_{t_1}=z_1>0$ and consider the function $u$
 such that
$$\begin{cases} u(t)=z_0+\int_0^tk(s,u(s))ds\ ,\quad t\in[0,T] \\
u(t_1)=z_1  \ .
\end{cases}$$

Note that the definition of $k$ yields that $z_0>0$.
 Hence, from Lemma \ref{lem-des-iyana}, we have
\begin{equation*}
  \phi_t=\bar y_t-y_t=\int_0^t\big[b(\bar y_s+\omega_s)-b(y_s+\omega_s)\big]\,ds
  \le \int_0^tk(s,\phi_s)ds\ ,\quad t\in[0,T]\ ,
\end{equation*}
and therefore
 $$\phi_t-u(t)\le -z_0+\int_0^t k(s,\phi_s-u(s))\,ds\ ,\quad t\in[0,T]\ .$$
 Thus, Lemma \ref{Pachpatte} 
 applied to $f(t):=\phi_t-u(t)$, $g(t):=-z_0\exp\{\int_0^t a(s)\,ds\}$ and $h(t):=-z_0$, leads to write
 $$\phi_t-u(t)\le -z_0\exp\Big\{\int_0^t a(s)ds\Big\}\ ,\quad t\in[0,T]\ ,$$
 which, for $t=t_1$,  gives $0\le-z_0\exp\big\{\int_0^{t_1} a(s)ds\big\}<0$, 
 a contradiction. Consequently, $\phi\equiv 0$
 on $[0,T]$ and 
 the proof is complete.
\end{proof}

We use Theorem 
\ref{the:3.2n} in the proof of Theorem  \ref{thm:main} below in order to show the path-by-path uniqueness  
for equation (\ref{eq-bw2}). 
Davie \cite{MR2377011} computes a difficult estimate of the moments of 
the integral
\begin{equation*}
\int_0^T \big(b(W_t+x)-b(W_t)\big)\,dt\ .
\end{equation*}
to replace Lipschitz-type
conditions in the study of a multidimensional version of (\ref{eq-bw2}) with bounded $b$.
The use of direct results on ordinary differential 
equations allows a different and shorter proof in dimension one, valid for unbounded coefficients $b$. In the proof, besides Theorem \ref{the:3.2n}, we make use of the following 
comparison theorem (see, for instance, Hartman \cite[Theorem III.4.1]{MR0171038}). 

\begin{lemma}\label{le-compthm}
  Let $h\colon[0,T]\times\Reals\rightarrow\Reals$ be a continuous function, $c\in\Reals$ and $u$ the minimal 
  solution to
  \begin{equation*}
    u'=h(t,u),\ \quad u_0=c\ .     
  \end{equation*}
  Also let $v$ be a differentiable function such that $v_0\ge c$ and $v'_t\ge h(t,v_t), \ t\in[0,T]$. Then, on the 
  interval of existence of $u$,  $v_t\ge u_t$. 
\end{lemma}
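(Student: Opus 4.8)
The plan is to reduce Lemma~\ref{le-compthm} to two ingredients: the elementary \emph{strict} comparison principle for differentiable sub/supersolutions, and the classical description of the minimal solution as a monotone limit of solutions of strictly perturbed equations; this is precisely the structure of the Peano--Kamke argument. Since it suffices to prove $v_t\ge u_t$ on every compact subinterval $[0,\beta]$ of the interval of existence of $u$, I would fix such a $\beta$ and, for each $\varepsilon>0$, use Peano's existence theorem to obtain a solution $u_\varepsilon$ of the perturbed initial value problem
\begin{equation*}
  u_\varepsilon'=h(t,u_\varepsilon)-\varepsilon\ ,\qquad u_\varepsilon(0)=c\ ,
\end{equation*}
defined on some interval $[0,\beta_\varepsilon)$.

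The first step is the strict comparison $v_t>u_\varepsilon(t)$ for $t\in(0,\beta_\varepsilon)$. At $t=0$ one has $v_0\ge c=u_\varepsilon(0)$, and since $v'_0\ge h(0,v_0)$ while $u_\varepsilon'(0)=h(0,c)-\varepsilon$, the difference $v-u_\varepsilon$ is strictly positive just to the right of $0$. If it vanished again, at a first such time $t^*$, then $v_{t^*}=u_\varepsilon(t^*)$ and $v-u_\varepsilon>0$ on $(0,t^*)$, so the derivative of $v-u_\varepsilon$ at $t^*$ would be $\le 0$; but
\[
  v'_{t^*}-u_\varepsilon'(t^*)\ \ge\ h(t^*,v_{t^*})-\bigl(h(t^*,u_\varepsilon(t^*))-\varepsilon\bigr)\ =\ \varepsilon\ >\ 0\ ,
\]
a contradiction. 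Running the same first-crossing argument with two perturbation levels shows that $u_\varepsilon$ increases as $\varepsilon\downarrow 0$; and, run with any solution of the \emph{unperturbed} problem in the role of $v$, it shows that every such solution---in particular $u$---dominates all the $u_\varepsilon$.

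The second step is to let $\varepsilon\downarrow 0$. Because $h$ is continuous, it is bounded on the compact set in which the curves $u_\varepsilon$ lie for $\varepsilon$ small, so the family $\{u_\varepsilon\}$ is equi-Lipschitz on $[0,\beta]$ (whence $\beta_\varepsilon>\beta$ for those $\varepsilon$); being monotone as well, it converges uniformly on $[0,\beta]$ by Arzel\`a--Ascoli (or Dini), and passing to the limit in the integral form of the equation shows the limit solves $u'=h(t,u)$, $u(0)=c$. Since it is dominated by every solution of that problem, the limit is the minimal one, i.e.\ it equals $u$. Letting $\varepsilon\downarrow 0$ in $v_t\ge u_\varepsilon(t)$ then gives $v_t\ge u_t$ on $[0,\beta]$, hence on the whole interval of existence of $u$.

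The step I expect to be the real obstacle is the bookkeeping in the last paragraph: one must check that the perturbed solutions survive on intervals exhausting the interval of existence of $u$, and that their monotone limit is the \emph{minimal} solution rather than merely some solution. This is the one place where continuity---hence local boundedness---of $h$ is genuinely used, through the compactness argument above; everything else is just the first-crossing principle for differentiable functions. A reader who prefers a reference can instead appeal directly to Hartman~\cite[Theorem III.4.1]{MR0171038}.
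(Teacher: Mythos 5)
Your argument is correct and, in effect, coincides with the paper's: the paper offers no proof of this lemma beyond citing Hartman's Theorem III.4.1, and what you write out is precisely the standard Peano--Kamke proof of that cited result (strict first-crossing comparison with the $\varepsilon$-perturbed equation, then monotone passage to the limit identifying the minimal solution), ending with the same reference. The only loose end is the one you flag yourself -- the a priori confinement of the curves $u_\varepsilon$ to a compact set so that $\beta_\varepsilon>\beta$, which as written is slightly circular but is the standard, fixable bookkeeping handled in Hartman's proof.
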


The following  is the main result of this section.
\begin{theorem}\label{thm:main}
 Let $W$ be a Brownian Motion on some probability space $(\Omega,{\cal F}, P)$, and 
 let $b\colon \Reals\rightarrow\Reals$ be a function satisfying hypotheses
 \emph{H1-H5} and: 
 \begin{enumerate} 
   \item  [\emph{H6.}]
   $b$ is continuous and $|b(x)|\le C(1+|x|)$, $\forall x$. 
   \item  [\emph{H7.}]
   $\displaystyle\E_P\Big[\int_0^T b'(|W_s|\rlim)\,ds \Big]<\infty$.
 \end{enumerate}
 Then, the stochastic differential equation
\begin{equation}\label{eq-W_t}
  X_t=\int_0^t b(X_s) \, ds + W_t
  \ ,\quad t\in[0,T] \ ,
\end{equation}
  has a unique path-by-path solution.  
\end{theorem}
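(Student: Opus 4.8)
The plan is to solve the associated family of ode's $\omega$-by-$\omega$. \emph{Existence} is immediate: for every $\omega\in C([0,T])$ with $\omega_0=0$, equation~(\ref{eq-bw2}) has a solution by Peano's theorem ($b$ is continuous by H6), and the linear growth of $b$ (H6), through Gronwall's lemma — or through Lemma~\ref{le-compthm} comparing with a linear ode — yields the a priori bound $|y_t|\le c\,(1+\|\omega\|_\infty)$ on $[0,T]$, so every solution is global; choosing for each $\omega$ the minimal solution $y=y(\omega)$ (it exists because $b(\cdot+\omega_t)$ is continuous, is jointly measurable in $(t,\omega)$, and on each $[0,t]$ depends on $\omega|_{[0,t]}$ only) gives a path-by-path solution $X_t=y_t+W_t$ of~(\ref{eq-W_t}), which is in particular adapted to the filtration of $W$. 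For \emph{uniqueness} I will combine Theorem~\ref{the:3.2n} with Girsanov's theorem; the point is that the function $a$ of~(\ref{cond:1}) must be in $L^1([0,T])$ only for $P$-almost every $\omega$, and such an a.s.\ statement can be carried through a change of measure using only the equivalence of the measures, with no moment estimate on the density.

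\emph{Uniqueness.} Every solution $\bar y$ of~(\ref{eq-bw2}) satisfies $y\le\bar y$ with $y=y(\omega)$ the minimal solution, so Theorem~\ref{the:3.2n} reduces matters to proving $\int_0^T a(t)\,dt<\infty$ for $P$-a.e.\ $\omega$, with $a$ as in~(\ref{cond:1}) built from $y$. Write $\int_0^T a(t)\,dt=I+II$, where $I$ is the contribution of the times with $\omega_t\ge 0$ and $II$ that of the times with $\omega_t<0$. Hypotheses H1, H2 and H5 force $b\ge 0$, and by H3 $b'$ is non-increasing on $(0,\infty)$; since $\int_0^t\ind_{\{W_s>0\}}b(W_s)\,ds\ge 0$, the argument of $b'$ in the first term of $a$ dominates $W_t$, hence on $\{W_t>0\}$ it is positive and $b'$ of it is at most $b'(W_t\rlim)=b'(|W_t|\rlim)$. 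Discarding the a.s.\ Lebesgue-null set $\{t:W_t=0\}$, this gives $I\le\int_0^T b'(|W_t|\rlim)\,dt=:J$, and by H7 and Fubini $\E_P[J]<\infty$, so $I\le J<\infty$ $P$-a.s. For the second term, dropping the indicator and putting $\omega=W$, $II\le\int_0^T b'(|y_t+W_t|\rlim)\,dt=\int_0^T b'(|X_t|\rlim)\,dt$.

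To control this last integral I invoke Girsanov's theorem. As $X$ is adapted and $|b|$ has linear growth (H6), the exponential local martingale $\mathcal E_t=\exp\bigl\{-\int_0^t b(X_s)\,dW_s-\tfrac12\int_0^t b(X_s)^2\,ds\bigr\}$ is a genuine martingale — a classical fact under a linear-growth (Bene\v{s}-type) condition, obtained by splitting $[0,T]$ into subintervals short enough for Novikov's criterion (using $|X_s|\le c\,(1+\|W\|_\infty)$ from the Gronwall bound) and concatenating. Let $Q$ be the probability with $dQ/dP=\mathcal E_T$; then $\tilde W_t:=X_t=W_t+\int_0^t b(X_s)\,ds$ is a $Q$-Brownian motion started at $0$ (L\'evy's characterization), so the path $X$ has under $Q$ the same law as $W$ under $P$. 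Since $c\mapsto\int_0^T b'(|c(t)|\rlim)\,dt$ is a measurable functional on $C([0,T])$ which by H7 is finite $P$-a.s.\ at $W$, it is finite $Q$-a.s.\ at $X$; and $\mathcal E_T>0$ makes $P$ and $Q$ equivalent, so $\int_0^T b'(|X_t|\rlim)\,dt<\infty$ $P$-a.s., i.e.\ $II<\infty$ $P$-a.s. Hence $a\in L^1([0,T])$ for $P$-a.e.\ $\omega$, Theorem~\ref{the:3.2n} gives uniqueness of the solution of~(\ref{eq-bw2}), and together with the existence step~(\ref{eq-W_t}) has a unique path-by-path solution.

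The main obstacle is the Girsanov step under the mere linear-growth hypothesis: proving that $\mathcal E$ is a true martingale (Novikov's condition may fail on all of $[0,T]$) and making precise the measurability and adaptedness of the minimal solution so that $\int_0^T b(X_s)\,dW_s$ is well defined. The rest — the deterministic bound $I\le J$, and the bookkeeping of right-limits and of the null sets $\{t:W_t=0\}$ — is routine. What keeps the argument short is that only the \emph{a.s.} finiteness of $\int_0^T b'(|X_t|\rlim)\,dt$ is needed, which follows from $P\sim Q$ and H7; no moment bound on the density $\mathcal E_T$ (which would be false for large $T$) enters.
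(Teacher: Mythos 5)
Your overall strategy is the right one and very close to the paper's: produce the minimal solution as an adapted process, verify that the function $a$ of (\ref{cond:1}) lies in $L^1([0,T])$ for $P$-a.e.\ path by combining H7 with a Girsanov change of measure and the mere equivalence of the two probabilities, and then invoke Theorem \ref{the:3.2n} to identify the minimal with the maximal solution. The deterministic reduction $I\le\int_0^T b'(|W_t|\rlim)\,dt$ via $b\ge 0$ and the monotonicity of $b'$ on $(0,\infty)$ is correct, as is the key observation that only almost sure finiteness, and no moment bound on the density, is needed.

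There is, however, one genuine gap, which you flag yourself but do not close: the measurability and adaptedness of the minimal solution $\omega\mapsto y(\omega)$. This cannot be deferred --- without it the It\^o integral $\int_0^t b(X_s)\,dW_s$ in your density $\mathcal{E}$ is not even defined, and the Girsanov step collapses. The parenthetical you offer (continuity of $b$, joint measurability of $(t,\omega)\mapsto b(y+\omega_t)$, causality in $\omega$) gives existence of the minimal solution for each fixed $\omega$, but says nothing about the measurability of that selection. The paper devotes the entire first half of its proof to exactly this point: approximate $b-\tfrac1n$ uniformly on $[-n,n]$ by polynomials $p_n$ chosen so that $f_n(t,y)=p_n(y+\omega_t)$ increases to $f(t,y)=b(y+\omega_t)$; the resulting Lipschitz equations have unique adapted solutions $Y^n$; Lemma \ref{le-compthm} shows the $Y^n$ are non-decreasing in $n$ and dominated by every solution of (\ref{eq-bw2}); and Dini's theorem together with Hartman's convergence theorem identify the (automatically adapted) monotone limit with the minimal solution. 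You need to supply this, or an equivalent measurable-selection argument, before Girsanov can be applied. Once that is done, your version of the Girsanov step is a legitimate and arguably more direct variant of the paper's: you remove the drift from $X$ itself, justify that $\mathcal{E}$ is a true martingale by the Bene\v{s} linear-growth criterion (this is precisely \cite[Corollary 3.5.16]{MR1121940}, the reference the paper also uses, so the subinterval-plus-Novikov argument need not be redone), and read off that $X$ is a $Q$-Brownian motion; the paper instead builds an auxiliary weak solution $(\bar X,\bar W)$ on another space and transfers the a.s.\ finiteness through uniqueness in law (\cite[Proposition 5.3.10]{MR1121940}). Your route dispenses with the uniqueness-in-law step, at the price of needing the adaptedness of $X$ up front --- which is exactly the part still missing.
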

\begin{proof}

Notice that, since $b$ is continuous, there exist minimal and maximal solutions to Equation
(\ref{eq-W_t}) for every continuous path $\omega$ of the Brownian motion $W$. First, we are going to 
construct a solution adapted to the natural filtration $\{{\cal F}_t\}_{t\in[0,T]}$  of $W$ which coincides with the minimal
solution to (\ref{eq-W_t}); secondly, we will see that this adapted solution also coincides with
the maximal solution. In conclusion, we will get the path-by-path uniqueness of the given 
stochastic differential equation. 
  
  Let $b_n(x):=b(x)-\frac{1}{n}$, $n\ge 1$. Consider a polynomial $p_n(x)$ such that 
\begin{equation*}
  |b_n(x)-p_n(x)|<\varepsilon_n
  \ ,\quad
  \text{for $x\in[-n,n]$}
  \ ,
\end{equation*}
  with $\varepsilon_n=\frac{1}{2n(n+1)}$, and extend it as $p_n(x)\equiv p_n(n)$, for $x\ge n$, and 
  $p_n(x)\equiv p_n(-n)$, for $x\le -n$. 
  
  Let $f(t,y):=b(y+\omega_t)$, and $f_n(t,y):=p_n(y+\omega_t)$.
   The functions $f_n$ are bounded, continuous and globally Lipschitz in the second variable, uniformly
  in the first. Therefore, 
  the stochastic differential equation $Y_t^n=\int_0^t f_n(s, Y_s^n)\, ds$ (equivalently,
  $X_t^n=\int_0^t p_n(X_s^n)\, ds + W_t$) has a unique
  $\{{\cal F}_t\}$-adapted solution, which is a path-by-path solution of the corresponding deterministic
  equation for almost all Brownian sample paths. 
  Also, $-2\le f_n\le f$ and $f_n$ converges to $f$ pointwise and monotonically from below.
  
  By Lemma \ref{le-compthm}, applied to
  \begin{equation*}
    \left\{\begin{array}{lll}
             {y_t^n}'=f_n(t,y_t^n) \\
             y_t'=f(t,y_t)\ge f_n(t,y_t) \\
             y_0^n=y_0=0 
             \ ,
           \end{array}
    \right.       
  \end{equation*}
  where $y$ is any solution of (\ref{eq-bw2}), we get $y\ge y^n$, on $[0,T]$.
  By the same comparison argument, since $\{f_n\}_n$ is non-decreasing, 
  the sequence of solutions $y^n$ is 
  non-decreasing as $n\to \infty$. 

  Clearly, 
  there exists a compact set $K\subset\Reals$ such that $y^n\colon[0,T]\rightarrow K$, for all $n$.
  Hence, by Dini's theorem, the sequence $f_n$ 
  converges uniformly to $f$ when all functions are considered on $[0,T]\times K$.

  We can then apply Theorem 1.2.4
  of Hartman \cite{MR0171038}, which states that
  a certain subsequence $y^{n_k}$ is uniformly convergent on $[0,T]$ to a solution
  of $y'=f(t,y)$. But since $\{y^n\}$ is increasing, it must itself converge to that solution. 
  Finally, given that $y^n$ is bounded from above by any solution of (\ref{eq-bw2}), the limit must be the
  minimal solution.
 
  The stochastic process $Y_t$ constructed in this way is therefore a solution,  
  $\{{\cal F}_t\}$-adapted, of the 
  stochastic differential equation  
   $Y_t=\int_0^t b(Y_s+W_s)\, ds$. Hence, the process $X_t:=Y_t+W_t$ is
  a strong solution of (\ref{eq-W_t}).

\bigskip

   For the second part of the proof, we start with the process $X$ just constructed, and prove first that
   $\int_0^T b'(|X_s|\rlim)\,ds$ is finite almost surely. Indeed,
   let $\bar X$ be a Brownian motion under some other probability $Q$ on $(\Omega,{\cal F})$; 
   thanks to the linear growth condition H6
   and \cite[Corollary 3.5.16]{MR1121940},
   Girsanov's theorem can be applied and there exists a probability $\bar P$ equivalent to $Q$
   such that $\bar X_t-\int_0^tb(\bar X_s)\,ds=:\bar W_t$ is a $\bar P$-Brownian motion. That means that 
   $(\bar X, \bar W)$ is a weak solution of equation (\ref{eq-W_t}). 
   
   By H7, and the equivalence of $\bar P$ and $Q$, we obtain
   
\begin{equation}\label{eq:barPbarX}
  \bar P\Big(\int_0^T b'(|\bar X_s|\rlim)\,ds <\infty\Big)=1
  \ .
\end{equation}
   The processes $X$ and $\bar X$ have a.s.~continuous paths under probabilities $P$ and $Q$ 
   (hence $\bar P$), respectively.  
    Therefore, 
   $P\big(\int_0^T b(X_s)^2\,ds<\infty\big)=1$ 
   and 
   $\bar P\big(\int_0^T b(\bar X_s)^2\,ds<\infty\big)=1$, thanks to H6.    
   Applying \cite[Proposition 5.3.10]{MR1121940}, we obtain that the laws of the vector processes
   $(X,W)$ under $P$ and
   $(\bar X,\bar W)$ under $\bar P$ are the same. 

  Consider the space of continuous functions $C([0,T])$ with its Borel $\sigma$-field and the $\Reals$-valued functional
  on $C([0,T])$ given by $\Lambda_g(x):=\int_0^T g(|x_s|)\,ds$, with $g\colon\Reals^+\rightarrow\Reals$ 
  continuous. $\Lambda_g$ is a continuous functional, and therefore measurable. 
  This is also true when $g$ is the indicator function of an interval, due to the dominated convergence 
  theorem. By the usual monotone class argument, 
  we get that $\Lambda_g$ is measurable for all bounded measurable functions $g$. And by the monotone convergence
  of the integrals, we get the same also for unbounded non-negative functions. That means that the law of the random variable $\Lambda_g(X)$ 
  under $P$ coincides with that of $\Lambda_g(\bar X)$ under $\bar P$, and in particular, applied to
  $g(z):=b'(|z|\rlim)$,  
\begin{equation*}
  P\Big(\int_0^T b'(|X_s|\rlim)\,ds<\infty\Big)=\bar P\Big(\int_0^T b'(|\bar X_s|\rlim)\,ds<\infty\Big)
  \ ,
\end{equation*}
  which together with (\ref{eq:barPbarX}) yields that $\int_0^T b'(|X_s|\rlim)\,ds$ is a.s.~finite, 
  as we wished to see. It means that
  the process $Y_t=X_t-W_t$ satisfies that 
  $t\mapsto b'(|Y_t+W_t|\rlim)$ is a.s.~integrable on $[0,T]$.
  Moreover, using hypotheses H3 and H7, the almost sure integrability of 
\begin{equation*}  
    \ind_{\{\omega_t\ge 0\}}
    \cdot
    b'\Big(\Big(\omega_t+\int_0^t\ind_{\{\omega_s>0\}}b(\omega_s)\,ds\Big)\rlim\Big)  
\end{equation*}
is immediate. Applying  Theorem   \ref{the:3.2n} 
  one concludes that $Y_t(\omega)$  
  coincides a.s.~with the maximal solution to the deterministic equation (\ref{eq-bw2}). 

We have seen that $Y_t(\omega)$ is both the minimal and maximal solution of (\ref{eq-bw2}). 
Hence, $X_t=Y_t+W_t$ is the unique path-by-path solution to the stochastic differential equation (\ref{eq-W_t}).
\end{proof}

\begin{remark}
  As a by-product of the proof, we have seen that under the conditions of the theorem, the unique path-by-path
  solution is also a strong solution, i.e.\ it is $\{{\cal F}_t\}$-adapted.
  Also, Theorem \ref{thm:main} is valid replacing $b'$ by a function $g$ 
   satisfying the conditions of Remark \ref{rem:g}(3) and hypothesis H7. 
\end{remark}

\begin{examples}\label{ex:sqrt}\emph{
    The paradigmatic function satisfying all our hypothesis is the square root: $b(x)=\sqrt{|x|}$.
    In fact, all functions of the form $b(x)=|x|^{\alpha}$, with $0<\alpha<1$, are continuous 
    non-Lipschitz functions satisfying H1-H7. To check H7, just notice that $W_s$ is Gaussian
    with variance $s$, and therefore 
\begin{equation*}
  \E[|W_s|^{\alpha-1}]=C_\alpha\cdot s^{\frac{\alpha-1}{2}}
  \ ,
  \quad 
  \text{for some constant $C_\alpha$, for every $\alpha>0$\ .}
\end{equation*}
}%
\emph{%
    The hypothesis of continuity of $b$ is needed in the proof of Theorem \ref{thm:main} 
    to guarantee the 
    existence of the uniform approximations of $b(x)-\frac{1}{n}$ on $[-n,n]$ by polynomials. 
    However, one can 
    allow for some discontinuities and ideas similar to those of the
    preceding proof can be applied.  
    For example, this is the true with the function 
    \begin{equation*}
      b(x)=\begin{cases}
             \sqrt{x}, & \text{if $x\ge 0$} 
             \\
             \sqrt{-x}+1, & \text{if $x<0$} \ .
           \end{cases}
    \end{equation*}
  We develop this particular case in the next section.
}
\qed
\end{examples}
Observe that the condition (\ref{cond:1}) for the uniqueness of solutions does not depend 
only on the noise function $\omega$ and the coefficient $b$, but also on the minimal 
solution $y$ to (\ref{eq-bw2}).
It is necessary to have an estimate of the type $b'(|y_t+\omega_t|\rlim)\le F(t)$,
with an integrable function $F$, to obtain the uniqueness from Theorem \ref{the:3.2n}.
Hypothesis H6 was only used to this purpose.
For a non-negative noise however, Condition (\ref{cond:1}) becomes 
\begin{equation}\label{cond:2}
 b'\Big(\Big(\omega_t+\int_0^t b(\omega_s)\,ds\Big)\rlim\Big)\in L^1([0,T])
\end{equation}
and such an estimate is not necessary. Hypotheses H4 and H5 are not needed either. 
For instance, we can prove the following result,
where the noise is the absolute value of a Brownian motion.

\begin{proposition}\label{prop-|W_t|}
  Let $W_t$ be a Brownian Motion, and consider the stochastic differential equation
\begin{equation}\label{eq-|W_t|}
  X_t=\int_0^t b(X_s) \, ds + |W_t|\ ,\quad t\in[0,T]\ .
\end{equation}
  Assume $b$ satisfies hypotheses H1-H3 and H7. Then, 
  equation (\ref{eq-|W_t|}) has at most one non-negative path-by-path solution, for almost all sample paths of $W$. 

  If, moreover, $b\ge 0$ on an interval $(-\varepsilon,0)$, then this is the unique path-by-path
  solution. 
\end{proposition}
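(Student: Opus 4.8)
The plan is to exploit that the noise $\omega_t:=|W_t|$ is non-negative, which keeps us permanently in ``Case~1'' of Lemma~\ref{lem-des-iyana}; consequently hypotheses H4 and H5 play no role, and Condition~(\ref{cond:1}) collapses to the simpler Condition~(\ref{cond:2}). First I would record an a priori bound. Fix a continuous path $\omega=|W_\cdot|$ with $\omega_0=0$. If $x$ is a \emph{non-negative} solution of (\ref{eq-bw}), then $y:=x-\omega$ solves (\ref{eq-bw2}) with $y_0=0$; since $x_s\ge0$ gives $b(x_s)\ge b(0)=0$ by H1--H3, the function $y$ is non-decreasing, hence $y\ge0$, and from $x_s=y_s+\omega_s\ge\omega_s\ge0$ and H2 one gets $b(x_s)\ge b(\omega_s)$. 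Integrating, every non-negative solution satisfies $x_t\ge m_t:=\omega_t+\int_0^t b(\omega_s)\,ds$ on $[0,T]$.

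Next, for two non-negative solutions $x^1,x^2$ I would set $\phi_t:=|x^1_t-x^2_t|$ and use H3 (mean value theorem together with $b'$ non-increasing on $(0,\infty)$) to obtain
\[
  |b(x^1_t)-b(x^2_t)|\ \le\ \phi_t\cdot b'\bigl(\min(x^1_t,x^2_t)\rlim\bigr)\ \le\ \phi_t\cdot b'(m_t\rlim)\ =:\ \phi_t\,a(t),
\]
where $a(t)=b'\bigl(\bigl(\omega_t+\int_0^t b(\omega_s)\,ds\bigr)\rlim\bigr)$ is exactly the function appearing in Condition~(\ref{cond:2}), the second inequality using the previous step and the monotonicity of $b'$. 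Since $m_t\ge\omega_t=|W_t|$ and $b'$ is non-increasing, $0\le a(t)\le b'(|W_t|\rlim)$, and H7 gives $\int_0^T b'(|W_s|\rlim)\,ds<\infty$ $P$-a.s., so $a\in L^1([0,T])$ almost surely. Then $\phi_t\le\int_0^t a(s)\phi_s\,ds$ and Gronwall's inequality with an $L^1$ kernel forces $\phi\equiv0$, i.e.\ $x^1=x^2$, which is the first assertion (for a.e.\ sample path of $W$). Alternatively one could invoke the mechanism of Theorem~\ref{the:3.2n} applied to the minimal and maximal non-negative solutions, observing that Case~1 of Lemma~\ref{lem-des-iyana} — the only one occurring when $\omega\ge0$ — needs only H1--H3.

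For the last sentence, under the extra assumption $b\ge0$ on $(-\varepsilon,0)$ — which combined with H1--H3 gives $b\ge0$ on all of $(-\varepsilon,\infty)$ — I would show every solution is non-negative and then apply the above. Given a solution $x$, the function $y=x-\omega$ is $C^1$ with $y'_t=b(x_t)$. If $x$ ever reached $-\varepsilon$, letting $t_1>0$ be the first such time, on $[0,t_1)$ one has $x_t>-\varepsilon$, hence $y'_t=b(x_t)\ge0$, so $y$ is non-decreasing on $[0,t_1]$ and $y_{t_1}\ge y_0=0$, giving $x_{t_1}=y_{t_1}+|W_{t_1}|\ge0>-\varepsilon$, a contradiction; hence $x_t>-\varepsilon$ throughout, so $b(x_t)\ge0$, $y$ is non-decreasing on $[0,T]$, and $x=y+|W|\ge0$. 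Existence of a (non-negative) path-by-path solution then follows from Peano's theorem applied to the equation with $b$ replaced by $x\mapsto b(x\vee0)$, whose solution is non-negative by the same argument and therefore solves the original equation.

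The hard part will be the sign argument of the last step: because $t\mapsto|W_t|$ is nowhere differentiable one cannot reason with a derivative of $x$ at a putative minimum, and the device of passing to the $C^1$ function $y$ is what makes the first-hitting-time argument go through. The Gronwall step is routine once one notes $0\le a\le b'(|W_\cdot|\rlim)$ and uses H7, so the possible infinitude of $b'(0\rlim)$ is harmless.
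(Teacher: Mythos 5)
Your proof is correct, and its essential content coincides with the paper's: the whole point is that for the non-negative noise $\omega=|W|$ one always sits in Case~1 of Lemma~\ref{lem-des-iyana}, Condition~(\ref{cond:1}) collapses to Condition~(\ref{cond:2}), and that condition is verified exactly as you do it, via $\omega_t+\int_0^t b(\omega_s)\,ds\ge|W_t|$, the monotonicity of $b'$, and H7. Where you diverge is in the uniqueness mechanism: the paper simply invokes Theorem~\ref{the:3.2n} (whose proof runs through the Pachpatte comparison lemma and requires ordering the two solutions $y\le\bar y$, hence implicitly working with minimal/maximal solutions), whereas you observe that in this special case the Lipschitz-type bound $a(t)=b'\bigl((\omega_t+\int_0^t b(\omega_s)\,ds)\rlim\bigr)$ is \emph{solution-independent}, which lets you run a plain Gronwall argument on $\phi_t=|x^1_t-x^2_t|$ for two arbitrary non-negative solutions, with no ordering and no appeal to extremal solutions. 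That is a genuine (if modest) simplification, available only because the noise is non-negative. Two further points in your favour: the paper disposes of the second assertion with the single sentence ``then any solution will be non-negative,'' while you supply the actual first-hitting-time argument (correctly routed through the absolutely continuous function $y=x-\omega$ rather than through a derivative of $x$, which would not exist), and you also address existence via Peano applied to $x\mapsto b(x\vee0)$, which the paper leaves implicit. One cosmetic slip: you call $y$ a $C^1$ function with $y'_t=b(x_t)$, but H1--H3 give continuity of $b$ only on $[0,\infty)$, so before you know $x\ge0$ you should only claim that $y$ is absolutely continuous with $y'=b(x)$ a.e.; since your hitting-time argument really only uses the integral representation $y_{t_1}=\int_0^{t_1}b(x_s)\,ds\ge0$, nothing breaks.
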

\begin{proof}
  As we have already pointed out, we only need to show that condition (\ref{cond:2}) holds true 
  for almost all sample paths of $|W|$.  
  But this is an easy consequence of the facts that $b(|W|)$ is non-negative, $b'$ is non-increasing 
  on $(0,\infty)$
  and the expectation in H7 is finite. 
  If $b$ is also non-negative on an small interval to the left of 0, then any solution will be non-negative,
  and we get the path-by-path uniqueness. 
\end{proof}

\begin{remark}
One can also deduce from Condition (\ref{cond:2}) a result for negative noise: If $\omega\le 0$,
equation (\ref{eq-bw}) is equivalent to 
\begin{equation*}
  z_t=\int_0^t \tilde b(z_s)\,ds+\tilde\omega_t
\end{equation*}
where $\tilde b(z):=-b(-z)$ and $\tilde\omega_t:=-\omega_t$. Therefore we obtain in this case the uniqueness 
of a non-positive solution under condition  (\ref{cond:2}) and the hypotheses
\begin{enumerate}[H1.]
  \item
  $b(0)=0$. 
  \item [H2'.]
  $b$ non-decreasing on $(-\infty,0)$. 
  \item [H3'.]
  $b$ continuous on $(-\infty,0]$, and of class $C^1$ with $b'$ non-decreasing on $(-\infty,0)$.  
\end{enumerate}
  Hence, in this situation, the stochastic differential equation (\ref{eq-|W_t|}) with $-|W_t|$ 
  instead of $+|W_t|$ has a unique path-by-path non-positive solution; and it is the unique path-by-path
  solution if moreover $b\le 0$ on some interval $(0,\varepsilon)$.
\end{remark}

  Some known results for uniqueness in the theory of ordinary differential equations can be
  used in particular cases to obtain results similar to those above; this will be illustrated in the
  following sections. In Section \ref{sec-pos_noise} we consider 
  the discontinuous case based in the square root that was mentioned in Examples \ref{ex:sqrt},
  and the square root itself,  $b(x)=|x|^{1/2}$, for a non-negative disturbance.
  For the latter, the results are not really better
  than applying the general setting above, but they are easier to obtain by other means. In Section 
  \ref{sec:wdif}, we study the uniqueness of the 
  solution to equation (\ref{eq-bw2}) for some differentiable noises.
\section{The particular case of the square root}\label{sec-pos_noise}

\subsection{Example: square root with a discontinuity}
One can allow the function $b$ to have some discontinuities and still get uniqueness of solutions.
We illustrate this point with
    \begin{equation*}
      b(x)=\begin{cases}
             \sqrt{x}, & \text{if $x\ge 0$} 
             \\
             \sqrt{-x}+1, & \text{if $x<0$}\ .
           \end{cases}
    \end{equation*}
    Defining
    \begin{equation*}
      b_n(x)=\begin{cases}
               \sqrt{x}-\frac{1}{n}\ , & \text{if  $x> 0$}
               \\
               \sqrt{-x}+1-\frac{1}{n}\ , & \text{if $x<-\frac{1}{n}$}
               \\
               -(n+\sqrt{n})x -\frac{1}{n}\ , & \text{if $-\frac{1}{n}\le x \le 0$\ ,}
             \end{cases}
    \end{equation*}
     
we have that $\{b_n:\ n\in\mathbb{N}\}$ is a sequence of continuous functions on $\mathbb{R}$
such that, for $x\in\mathbb{R}$,
\begin{equation}\label{eq:bybn}
b_n(x)\le b(x)\quad\mbox{\rm and}\quad b_{n+1}(x)-b_n(x)\ge \frac{1}{n(n+1)}\ .
\end{equation}
As in the proof of Theorem \ref{thm:main} we consider a polynomial $p_n$ such that
\begin{equation*}
  |b_n(x)-p_n(x)|<\varepsilon_n
  \ ,\quad
  \text{for $x\in[-n,n]$}
  \ ,
\end{equation*}
  with $\varepsilon_n=\frac{1}{2n(n+1)}$, and extend it as $p_n(x)\equiv p_n(n)$, for $x\ge n$, and 
  $p_n(x)\equiv p_n(-n)$, for $x\le -n$. 
The definitions of $b_n$ and $p_n$, together with (\ref{eq:bybn}), allow us to deduce that, for $x\in\mathbb{R}$,
\begin{equation}\label{eq:bnypn}
p_{n+1}(x)-p_n(x)\ge 0 \quad\mbox{\rm and}\quad b(x)\ge p_n(x)+\varepsilon_n \ .
\end{equation}
Hence, $-\varepsilon_n-\frac{1}{n}\le p_n(x)<b(x)$, $x\in\mathbb{R}$. Therefore,
using that $b$ has linear growth, we can find a constant $K>0$ such that
\begin{equation}\label{eq:lgpn}
\left|p_n(x)\right|\le K(1+|x|)\ ,\quad \hbox{for }\ x\in\mathbb{R}\ \ \hbox{\rm and }
\ n\in\mathbb{N} \ .
\end{equation}

Now we consider 
\begin{equation}\label{eq:spm}
Y_t^{(m)}=-{\tilde\varepsilon}_m+\int_0^tp_m(Y_s^{(m)}+W_s)\,ds\ ,\quad t\in[0,T]\ ,
\end{equation}
where ${\tilde\varepsilon}_m\downarrow 0$ as $m\rightarrow\infty$, and $W$ is a Brownian motion.
Observe that the fact that $p_m$ is a bounded Lipschitz function implies that equation (\ref{eq:spm})
has a unique solution, which is measurable on $\Omega\times[0,T]$ and 
adapted with respect to the filtration $\{\mathcal{F}_t\}$ generated by $W$.
In order to see that the minimal solution to equation (\ref{eq-W_t}) is also
measurable  and $\{\mathcal{F}_t\}$-adapted, we establish the following lemma.
\begin{lemma}\label{lem:com-eqs}
Let $Y$ be a solution of equation 
\begin{equation}\label{eq:stoch_y}
  Y_t=\int_0^t b(Y_s+W_s)\,ds\ ,\quad t\in [0,T]\ ,
\end{equation}
$m\in\mathbb{N}$ and 
$Y^{(m)}$ the solution of (\ref{eq:spm}). Then, 
$$Y_t\ge Y_t^{(m+1)}\ge Y_t^{(m)}\ , \quad t\in[0,T]\ .$$
\end{lemma}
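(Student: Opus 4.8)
The plan is to obtain both inequalities from the ODE comparison theorem, Lemma \ref{le-compthm}, in the same spirit as the first half of the proof of Theorem \ref{thm:main}. I would argue pathwise along a fixed sample path of $W$: because each $p_m$ is bounded and globally Lipschitz, (\ref{eq:spm}) is, for every $\omega$, the integral form of an initial value problem with continuous coefficient and a unique (indeed $C^1$) solution, namely $Y^{(m)}$, and the growth bound (\ref{eq:lgpn}) guarantees that all these solutions are defined on the whole of $[0,T]$. The only structural facts needed are those recorded in (\ref{eq:bnypn}), i.e.\ $p_m(x)\le p_{m+1}(x)<b(x)$ for every $x\in\Reals$, together with $\tilde\varepsilon_m\downarrow 0$, which gives $-\tilde\varepsilon_{m+1}\ge-\tilde\varepsilon_m$.

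For $Y_t^{(m+1)}\ge Y_t^{(m)}$ I would set $h(t,y):=p_m(y+W_t)$ and $c:=-\tilde\varepsilon_m$. Then $h$ is continuous and Lipschitz in $y$ uniformly in $t$, so the minimal solution of $u'=h(t,u)$, $u_0=c$, is the unique one, $u=Y^{(m)}$. Taking $v:=Y^{(m+1)}$, one has $v_0=-\tilde\varepsilon_{m+1}\ge c$ and, by the first inequality in (\ref{eq:bnypn}),
\begin{equation*}
v_t'=p_{m+1}(v_t+W_t)\ge p_m(v_t+W_t)=h(t,v_t)\ ,\qquad t\in[0,T]\ ,
\end{equation*}
so Lemma \ref{le-compthm} yields $Y^{(m+1)}\ge Y^{(m)}$ on $[0,T]$. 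The inequality $Y_t\ge Y_t^{(m+1)}$ is handled the same way, now with $h(t,y):=p_{m+1}(y+W_t)$ and $c:=-\tilde\varepsilon_{m+1}$, so that the (unique, hence minimal) solution of the associated initial value problem is $Y^{(m+1)}$, and with $v:=Y$: here $v_0=0\ge c$ and $v_t'=b(v_t+W_t)\ge p_{m+1}(v_t+W_t)=h(t,v_t)$ by the strict inequality $p_{m+1}<b$ in (\ref{eq:bnypn}).

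I expect the only delicate point --- the one place the present discontinuous $b$ differs from the setting of Theorem \ref{thm:main} --- to be the regularity of $Y$ needed in order to use Lemma \ref{le-compthm} with $v=Y$, since a priori $Y$ need not be of class $C^1$. This is dealt with by noting that $t\mapsto Y_t+W_t$ is continuous and $b$ is bounded on compact sets, whence the integrand in (\ref{eq:stoch_y}) is bounded on $[0,T]$; thus $Y$ is Lipschitz, in particular absolutely continuous, with $Y_t=\int_0^tb(Y_s+W_s)\,ds$ and $Y_t'=b(Y_t+W_t)\ge h(t,Y_t)$ at almost every $t$. The comparison conclusion of Lemma \ref{le-compthm} remains valid for such an absolutely continuous supersolution whose differential inequality holds almost everywhere --- this is exactly the level of generality in which comparison theorems of this kind are proved, arguing with lower Dini derivatives --- or, if one prefers to stay within the $C^1$ statement literally, one may regularize $Y$ by convolution in $t$, obtaining $C^1$ supersolutions of a slightly perturbed problem, and then let the perturbation vanish. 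Combining the two chains gives $Y_t\ge Y_t^{(m+1)}\ge Y_t^{(m)}$ on $[0,T]$, which is the assertion.
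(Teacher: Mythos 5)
Your proof is correct and follows essentially the same route as the paper: the inequality $Y^{(m+1)}\ge Y^{(m)}$ is obtained exactly as in the paper from Lemma \ref{le-compthm} together with (\ref{eq:bnypn}), and you correctly isolate the only delicate point, namely that $Y$ is merely Lipschitz with $Y'=b(Y+W)$ only almost everywhere. The paper handles that point by running the first-crossing argument explicitly with the Dini derivative $D^+Y_{t_1}$, using the strict gap $\varepsilon_m$ between $b$ and $p_m$ to reach a contradiction at the crossing time; this is precisely the content of the extension of Lemma \ref{le-compthm} to absolutely continuous supersolutions that you invoke, and your appeal to it is justified since the integral form of (\ref{eq:stoch_y}) gives the Dini-derivative inequality at every point, not just almost everywhere.
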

\begin{proof}
By Lemma \ref{le-compthm} and (\ref{eq:bnypn}), we only need to see that $Y_t\ge Y_t^{(m)}$, for $t\in[0,T]$.
By the continuity of $Y$ and $Y^{(m)}$,  and ${\tilde\varepsilon}_m>0$, there is  $t_0\in(0,T]$
such that $Y_t>Y^{(m)}_t$, for $t\in[0,t_0]$. Now suppose that there exist $t_1<T$
and $\eta>0$ such that
\begin{equation*}
Y_{t_1}=Y^{(m)}_{t_1}\quad\hbox{\rm and}\quad Y_{t}^{(m)}>Y_{t}\ ,\quad\text{for $t\in[t_1,t_1+\eta]$} \ .
\end{equation*}
Then, for $h>0$ small enough,
$$\frac{Y^{(m)}_{t_1+h}-Y^{(m)}_{t_1}}{h}=\frac{Y^{(m)}_{t_1+h}-Y_{t_1}}{h}
>\frac{Y_{t_1+h}-Y_{t_1}}{h}\ .$$
Consequently, 
\begin{equation*}
p_m(Y_{t_1}+W_{t_1})=p_m(Y_{t_1}^{(m)}+W_{t_1})\ge D^+Y_{t_1}\ ,
\end{equation*}
with $D^+Y_{t_1}=\limsup_{h\downarrow 0}\frac{Y_{t_1+h}-Y_{t_1}}{h}.$

On the other hand, (\ref{eq:bnypn}) leads to write
$$\frac{Y_{t_1+h}-Y_{t_1}}{h}=\frac{1}{h}\int_{t_1}^{t_1+h}b(Y_s+W_s)\,ds
>\frac{1}{h}\int_{t_1}^{t_1+h}\left(p_m(Y_s+W_s)+\varepsilon_m\right)ds\ .
$$
Therefore,
$$D^+Y_{t_1}\ge p_m(Y_{t_1}+W_{t_1})+\varepsilon_m>p_m(Y_{t_1}+W_{t_1})\ ,$$
a contradiction.
The proof is complete.
\end{proof}

Now we introduce the measurable and $\{\mathcal{F}_t\}$-adapted process 
${\bar Y}_t:=\lim_{m\rightarrow\infty}Y^{(m)}_t$, which is 
well-defined due to Lemma \ref{lem:com-eqs}.
\begin{lemma}\label{lem:byabsc}
The process ${\bar Y}$ is absolutely continuous (i.e.\ it has absolutely continuous paths).
\end{lemma}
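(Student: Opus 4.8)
The plan is to show that $\bar Y$ has a derivative a.e.\ that is locally bounded, which forces absolute continuity. Recall that each $Y^{(m)}$ solves \eqref{eq:spm}, so $(Y^{(m)})'_t = p_m(Y^{(m)}_t + W_t)$ for a.e.\ $t$; by \eqref{eq:lgpn}, the functions $p_m$ have a common linear growth bound $K(1+|x|)$. First I would establish a uniform-in-$m$ bound on the processes $Y^{(m)}$ themselves: since $Y^{(m)}_t = -\tilde\varepsilon_m + \int_0^t p_m(Y^{(m)}_s + W_s)\,ds$, one gets $|Y^{(m)}_t| \le \tilde\varepsilon_m + \int_0^t K(1+|Y^{(m)}_s|+|W_s|)\,ds$, and Gronwall's lemma (applied path-by-path, using that $\sup_{[0,T]}|W_s|<\infty$ a.s.\ and $\tilde\varepsilon_m\le\tilde\varepsilon_1$) yields a random constant $R=R(\omega)$, independent of $m$, with $|Y^{(m)}_t|\le R$ for all $t\in[0,T]$ and all $m$. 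Passing to the limit, $|\bar Y_t|\le R$ as well, so all the processes take values in the compact set $[-R,R]$.

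Next, from Lemma \ref{lem:com-eqs} the sequence $Y^{(m)}$ is non-decreasing in $m$ and bounded, so for any $0\le s\le t\le T$,
\begin{equation*}
  \bar Y_t - \bar Y_s = \lim_{m\to\infty} \big(Y^{(m)}_t - Y^{(m)}_s\big)
  = \lim_{m\to\infty} \int_s^t p_m(Y^{(m)}_u + W_u)\,du \ .
\end{equation*}
The integrands are uniformly bounded, since $|p_m(Y^{(m)}_u+W_u)| \le K(1 + R + \sup_{[0,T]}|W_u|) =: M(\omega)$, a finite random constant independent of $m$ and $u$. Hence $|\bar Y_t - \bar Y_s| \le M\,|t-s|$, i.e.\ $\bar Y$ is (path-by-path) Lipschitz on $[0,T]$, and in particular absolutely continuous. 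This is actually a slightly stronger conclusion than the statement asks for, and it comes essentially for free once the uniform bounds are in place.

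The main obstacle is purely the bookkeeping of the bounds: one must be careful that the constant $R$ coming out of Gronwall does not depend on $m$, which works precisely because the linear growth constant $K$ in \eqref{eq:lgpn} is uniform in $n$ (this is why that inequality was proved), and because $\tilde\varepsilon_m\downarrow 0$ is bounded. A secondary point to state cleanly is that all the estimates are made $\omega$-by-$\omega$ on the full-measure set where $W$ has continuous (hence bounded) paths, so $R$ and $M$ are finite-valued random variables rather than deterministic constants; this is enough for the almost sure absolute continuity of the paths of $\bar Y$. No further probabilistic input (Girsanov, etc.) is needed for this lemma.
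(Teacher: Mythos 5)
Your proposal is correct and takes essentially the same route as the paper: a path-by-path Gronwall bound, uniform in $m$ thanks to the uniform linear growth \eqref{eq:lgpn} of the $p_m$, followed by a uniform bound on increments that passes to the limit. The paper phrases the final step as a bound on sums of increments over an arbitrary partition, but with $\ell=2$ that is exactly your Lipschitz estimate $|\bar Y_t-\bar Y_s|\le C(1+W^*_T)\,|t-s|$.
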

\begin{proof}
Let $W^{*}_T=\sup_{t\in[0,T]}|W_t|$. Then (\ref{eq:lgpn}) yields, for some constant $K$, 
$$ |Y^{(n)}_t|\le K\int_0^T|Y^{(n)}_s|\,ds+K(1+T+TW^{*}_T)\ ,\quad t\in[0,T]\ .$$
Thus, Gronwall's lemma  implies
\begin{equation}\label{eq:cgl}
|Y^{(n)}_t|\le K(1+T+TW^{*}_T)\exp(KT)\ ,\quad t\in[0,T]\ .
\end{equation}
It therefore follows that there exists a positive constant $C$ such that, for
$0\le t_1<t_2<\ldots<t_{\ell}\le T$,
\begin{equation}\label{eq:abscontbound}
\sum_{i=1}^{\ell-1}|Y^{(n)}_{t_{i+1}}-Y^{(n)}_{t_i}|\le
K\sum_{i=1}^{\ell-1}\int_{t_i}^{t_{i+1}}\left(1+|Y^{(n)}_s+W_s|\right)ds
\le C(1+W^{*}_T)\sum_{i=1}^{\ell-1}(t_{i+1}-t_i)\ .
\end{equation}
Finally, we prove the assertion of the lemma by letting $n\rightarrow\infty$.
\end{proof}

Observe that an immediate consequence of Lemma \ref{lem:byabsc} and (\ref{eq:abscontbound})
(with $\ell=2$)
is that there exists
a measurable and $\{\mathcal{F}_t\}$-adapted process $A$ such that
\begin{equation}\label{eq:cotA}
{\bar Y}_t=\int_0^t A_s\,ds\quad \hbox{\rm and}\quad |A_t|\le C(1+W^{*}_T)\ ,
\quad t\in[0,T]\ .
\end{equation}

Now we can state the main result of this example.
\begin{theorem}
The process ${\bar Y}$ is the unique path-by-path solution of equation (\ref{eq:stoch_y}).
\end{theorem}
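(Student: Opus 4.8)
The plan is to prove that $\bar Y$ itself solves (\ref{eq:stoch_y}) --- which, by Lemma \ref{lem:com-eqs}, makes it the minimal solution --- and then to invoke Theorem \ref{the:3.2n} to force every other solution to coincide with it. First I would use (\ref{eq:cotA}) to note that $X_t:=W_t+\bar Y_t$ is obtained from $W$ by adding an $\{\mathcal F_t\}$-adapted process of finite variation, so that $X$ is a continuous $\{\mathcal F_t\}$-semimartingale with $\langle X\rangle_t=t$. The occupation times formula for continuous semimartingales then gives $\int_0^T\ind_{\{X_s=0\}}\,ds=\int_{\Reals}\ind_{\{0\}}(a)L^a_T(X)\,da=0$ almost surely, i.e.\ $X_s\neq0$ for Lebesgue-a.e.\ $s$.

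Next I would pass to the limit in (\ref{eq:spm}). On the set $\{X_s\neq0\}$ the function $b$ is continuous near $X_s$, and from the explicit form of $b_m$ together with $|b_m-p_m|<\varepsilon_m$ the polynomials $p_m$ converge to $b$ uniformly on every compact subinterval of $(0,\infty)$ and of $(-\infty,0)$; since $Y^{(m)}_s+W_s\to X_s$ and the $Y^{(m)}$ are uniformly bounded by (\ref{eq:cgl}), this yields $p_m(Y^{(m)}_s+W_s)\to b(X_s)$ for a.e.\ $s$. Using (\ref{eq:cgl}) and (\ref{eq:lgpn}) to produce an $\omega$-dependent but a.s.\ finite bound $|p_m(Y^{(m)}_s+W_s)|\le C_\omega$, dominated convergence applies, and letting $m\to\infty$ in $Y^{(m)}_t+\tilde\varepsilon_m=\int_0^t p_m(Y^{(m)}_s+W_s)\,ds$ gives $\bar Y_t=\int_0^t b(\bar Y_s+W_s)\,ds$. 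Combined with Lemma \ref{lem:com-eqs}, $\bar Y$ is the minimal solution of (\ref{eq:stoch_y}), and $X=W+\bar Y$ is a bona fide strong solution of $X_t=\int_0^t b(X_s)\,ds+W_t$.

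For uniqueness I would first check that this $b$ satisfies H1--H5 (continuity of $b$ at $0$ is not required, and H4--H5 are where the ``$+1$'' on the negative half-line is used), so that Lemma \ref{lem-des-iyana} and Theorem \ref{the:3.2n} are available. Any solution $\hat Y$ of (\ref{eq:stoch_y}) satisfies $\hat Y\ge Y^{(m)}$ for all $m$ by Lemma \ref{lem:com-eqs}, hence $\hat Y\ge\bar Y$, and it only remains to verify that the function $a(t)$ of (\ref{cond:1}), built with $y=\bar Y$ and $\omega=W$, is in $L^1([0,T])$ a.s. On $\{W_t\ge0\}$ its term is dominated a.e.\ by $\tfrac12|W_t|^{-1/2}\ind_{\{W_t>0\}}$, integrable in expectation over $[0,T]$ because $\E|W_s|^{-1/2}=C s^{-1/4}$, hence a.s.\ in $L^1$. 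On $\{W_t<0\}$ the term equals $\tfrac12|X_t|^{-1/2}$ on the a.e.-full set $\{X_t\neq0\}$, and to bound $\int_0^T|X_s|^{-1/2}\,ds$ I would repeat verbatim the Girsanov argument from the proof of Theorem \ref{thm:main}: $b$ has linear growth (so Girsanov applies via \cite[Corollary 3.5.16]{MR1121940}), $\int_0^T b(X_s)^2\,ds<\infty$ a.s., and \cite[Proposition 5.3.10]{MR1121940} then transfers the law of the path $X$ under $P$ onto that of a Brownian motion under a probability equivalent to Wiener measure; transferring the (possibly $+\infty$) functional $x\mapsto\int_0^T g(|x_s|)\,ds$ with $g(z)=\tfrac12|z|^{-1/2}$, as in that proof, and using $\E\int_0^T\tfrac12|W_s|^{-1/2}\,ds<\infty$ gives $\int_0^T|X_s|^{-1/2}\,ds<\infty$ a.s. Thus $a\in L^1([0,T])$ a.s., and Theorem \ref{the:3.2n} yields $\hat Y=\bar Y$, so $\bar Y$ is the unique path-by-path solution.

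The hard part is the first step, and precisely the need to know that $\{s:X_s=0\}$ is Lebesgue-null: since $b$ jumps \emph{upward} at $0$ while $p_m\uparrow b$, the pointwise convergence $p_m(Y^{(m)}_s+W_s)\to b(X_s)$ genuinely fails whenever $X_s=0$ and the approximants reach $0$ from the left, so the passage to the limit in (\ref{eq:spm}) cannot be carried out until that set is disposed of --- which is exactly where the semimartingale structure of $X=W+\bar Y$ enters. (Once Step~1 is established one could instead deduce the same from the absolute continuity, via Girsanov, of the law of $X$ with respect to Wiener measure, but at that stage $\bar Y$ is only known to be adapted and absolutely continuous, which is why I would prefer the occupation-times argument here.)
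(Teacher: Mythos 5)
Your proposal is correct and its overall architecture coincides with the paper's: pass to the limit in the polynomial approximations to identify $\bar Y$ as an adapted solution (hence, by Lemma \ref{lem:com-eqs}, the minimal one), and then run the Girsanov/law-transfer argument of Theorem \ref{thm:main} to verify condition (\ref{cond:1}) and invoke Theorem \ref{the:3.2n}. The one step where you genuinely diverge is the crucial preliminary fact that $X_s=\bar Y_s+W_s\neq 0$ for Lebesgue-a.e.\ $s$, almost surely, which is needed before the limit $p_m(Y^{(m)}_s+W_s)\to b(X_s)$ can be taken across the upward jump of $b$ at the origin. The paper gets this from (\ref{eq:cotA}) and Girsanov's theorem (via the Bene\v{s}-type Corollary 3.5.16 of Karatzas--Shreve), making the law of $X$ equivalent to Wiener measure; you instead observe that $X=W+\bar Y$ is already a continuous semimartingale with $\langle X\rangle_t=t$ and apply the occupation times formula, $\int_0^T\ind_{\{X_s=0\}}\,ds=\int_{\Reals}\ind_{\{0\}}(a)L^a_T\,da=0$. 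Your route is slightly more economical here: it needs only adaptedness and finite variation of $\bar Y$, not the quantitative bound on $A$ required to justify the change of measure at that early stage. A second, minor, improvement is in the closing step: rather than concluding that $\bar Y$ equals the \emph{maximal} solution (whose existence is not guaranteed by Peano for this discontinuous $b$), you apply Theorem \ref{the:3.2n} directly to the pair $(\bar Y,\hat Y)$ for an arbitrary solution $\hat Y\ge\bar Y$, which is exactly what that theorem delivers and avoids the issue altogether. Your verification of H1--H5 for this $b$ and of the integrability of $a(t)$ (using $\E|W_s|^{-1/2}=Cs^{-1/4}$ on $\{W_t\ge0\}$ and the transferred law of $X$ on $\{W_t<0\}$) matches what the paper leaves implicit in the phrase ``finish as in the proof of Theorem \ref{thm:main}.''
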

\begin{proof}
We first observe that (\ref{eq:cotA}) and Girsanov's theorem (see Theorem 3.5.1 and
Corollary 3.5.16 in Karatzas and Shreve \cite{MR1121940}) imply that
$W_t+{\bar Y}_t\neq 0$ for almost all $t\in[0,T]$, with probability 1. Now choose $s\in[0,T]$ so that
$W_s+{\bar Y}_s\neq 0$ a.s. Then
\begin{align*}\lefteqn{
\left|p_n(Y^{(n)}_s+W_s)-b({\bar Y}_s+W_s)\right|}\\
&\le\left|p_n(Y^{(n)}_s+W_s)-b_n(Y^{(n)}_s+W_s)\right|+
\left|b_n(Y^{(n)}_s+W_s)-b(Y^{(n)}_s+W_s)\right|\\
&\phantom{\le}+
\left|b(Y^{(n)}_s+W_s)-b({\bar Y}_s+W_s)\right|\\
&\le \varepsilon_n+\frac{1}{n}+
\left|b(Y^{(n)}_s+W_s)-b({\bar Y}_s+W_s)\right|\ .
\end{align*}
So we can conclude that $p_n(Y^{(n)}_s+W_s)\rightarrow b({\bar Y}_s+W_s)$ a.s.~as
$n\rightarrow\infty$ due to the continuity of $b$ on $\mathbb{R}-\{0\}$. Hence,
(\ref{eq:lgpn}), (\ref{eq:spm})  and (\ref{eq:cgl})  give
$${\bar Y}_t=\int_0^tb({\bar Y}_s+W_s)ds\ ,\quad t\in[0,T]\ .$$

We have obtained that the $\{{\cal F}_t\}$-adapted process $\bar Y$ is, by Lemma \ref{lem:com-eqs}, 
the minimal solution, a.s. 
Now we can finish
as in the proof of Theorem \ref{thm:main}. Instead of the continuity of $b$ it is enough that $b$ be
locally bounded to use that $P\{\int_0^t b(X_s)^2\,ds<\infty\}=1$.
\end{proof}

\subsection{Example: Square root and non-negative noise}
We assume in this section that $\omega\colon[0,T]\rightarrow[0,\infty)$ is a fixed continuous 
non-negative function. Consider the equation 
\begin{equation}\label{eq-sqrtw} 
   x_t=\int_0^t \sqrt{|x_s|}\,ds + \omega_t\ , \quad t\in[0,T]\ .
\end{equation}

and its equivalent, defining
$y_t=x_t-\omega_t$,
\begin{equation}\label{eq-sqrtw2} 
   y_t=\int_0^t \sqrt{|y_s+\omega_s|}\,ds\ , \quad t\in[0,T]\ .
\end{equation}
Any solution $y$ of (\ref{eq-sqrtw2}) is clearly a continuously differentiable, positive and non-decreasing function. The absolute 
value inside the square root is therefore unnecessary.

We will make use of the following uniqueness theorem (see Agarwal and Lakshmikantham \cite[Theorem 2.8.3]{MR1336820}): 
\begin{lemma}\label{LakshUT}
   (Lakshmikantham's Uniqueness Theorem). Suppose that    
$f(t,y)$ is defined in $D:=(0,T]×[-a,a]$, measurable in $t$ for each fixed
$y$, continuous in $y$ for each fixed
$t$, and there exists an integrable
function
$M$ on the interval $[0, T]$ such that 
$|f(t,y)|\le M(t)$ on $D$. Consider the ordinary one-dimensional differential
   equation 
\begin{equation}\label{eq-y'=f(t,y)}   
   y'(t)=f(t,y(t)) \ , \quad y(0)=0 \ ,
\end{equation}  
and define a \emph{classical solution} of (\ref{eq-y'=f(t,y)}) on $[0,T]$ as a function $y$ satisfying the initial condition, continuous in $[0,T]$,
and differentiable and verifying the equation on $(0,T]$.

Assume that:
\begin{enumerate}[i)]   
\item\label{LakshUT_i}
Any two classical solutions
$y$ and $\bar y$ of (\ref{eq-y'=f(t,y)})  satisfy 
\begin{equation*}
  \lim_{t\to 0+} \frac{|\bar y_t - y_t|}{B_t}=0
  \ ,
\end{equation*}
where 
$B$ is a continuous and positive function on $(0, T]$ with $\lim_{t\to 0^+} B(t) = 0$.
\item\label{LakshUT_ii} 
There is a continuous and non-negative function
$g\colon (0,T]\times[0,2a]$ for which 
the only solution $z$ of $z^{'}_t=g(t,z)$ on $[0, T]$ such that $\lim_{t\to0^+} \frac{z_t}{B_t} = 0$ 
is the trivial solution $z\equiv 0$.
\item\label{LakshUT_iii} 
$f$ is defined on $\bar D$ (the closure of $D$), and for all $(t, y)$ and $(t, \bar y)$ in 
$D$, the inequality $|f(t,\bar y)-f(t,y)|\le g(t,|\bar y - y|)$ is satisfied.
\end{enumerate}
  Then, equation (\ref{eq-y'=f(t,y)})  above has at most one classical solution on $[0,T]$. 
\end{lemma}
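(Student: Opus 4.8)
The plan is to show that any two classical solutions $y$ and $\bar y$ of (\ref{eq-y'=f(t,y)}) coincide. Set $m(t):=|\bar y_t-y_t|$ on $[0,T]$. Then $m$ is continuous, $m(0)=0$, and $0\le m(t)\le 2a$ since $y_t,\bar y_t\in[-a,a]$, so $(t,m(t))$ stays in the domain of $g$. On $(0,T]$ both $y$ and $\bar y$ are differentiable and satisfy the equation, so for the upper right Dini derivative we get $D^+m(t)\le|\bar y_t'-y_t'|=|f(t,\bar y_t)-f(t,y_t)|\le g(t,m(t))$, the last inequality being assumption \ref{LakshUT_iii}). Moreover, assumption \ref{LakshUT_i}) applied to $y$ and $\bar y$ gives $m(t)/B_t\to0$ as $t\to0^+$. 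Thus the problem reduces to the comparison statement: \emph{if $m\in C([0,T])$ with $0\le m\le 2a$ satisfies $D^+m(t)\le g(t,m(t))$ on $(0,T]$ and $m(t)/B_t\to0$, then $m\equiv0$.}

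To prove this, I would dominate $m$ by a genuine solution of $z'=g(t,z)$ carrying the same boundary rate at $0$, and then invoke assumption \ref{LakshUT_ii}). For $\delta\in(0,T)$, let $r_\delta$ be the maximal solution of $z'=g(t,z)$ through $(\delta,m(\delta))$, continued to the right; as $g\ge0$, $r_\delta$ is nondecreasing, and (see the last paragraph) we may arrange that it stays in $[0,2a]$ up to $T$. By the maximal-solution, Dini-derivative form of the comparison principle (the one-sided counterpart of Lemma \ref{le-compthm}; see Hartman \cite{MR0171038} or Lakshmikantham--Leela \cite{MR0379933}), $m(t)\le r_\delta(t)$ on $[\delta,T]$. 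Now let $\delta\downarrow0$ along a sequence: on each substrip $[\varepsilon,T]\times[0,2a]$ the continuous function $g$ is bounded, hence the $r_\delta$ are equibounded and equi-Lipschitz there, and a standard compactness argument (in the spirit of Hartman \cite{MR0171038}, Theorem~1.2.4, already used in the proof of Theorem \ref{thm:main}) yields a subsequence converging uniformly on compact subsets of $(0,T]$ to a solution $z$ of $z'=g(t,z)$ on $(0,T]$ with $m(t)\le z_t$ there.

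It remains to check that $z_t/B_t\to0$ as $t\to0^+$. Granting this, $z$ extends continuously to $t=0$ with $z_0=0$, so it is a solution on $[0,T]$ with vanishing $B$-rate, and assumption \ref{LakshUT_ii}) forces $z\equiv0$; since $0\le m\le z$, we conclude $m\equiv0$, i.e.\ $\bar y\equiv y$. To get $z_t/B_t\to0$ one exploits that the initial values of the dominating solutions are small relative to $B$, namely $r_\delta(\delta)=m(\delta)=o(B_\delta)$: starting from that value, $r_\delta$ grows only at the rate $g(t,r_\delta)$, so $r_\delta(t)$ for $t$ slightly larger than $\delta$ is controlled in terms of $B_\delta$, and this control survives the passage to the limit.

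I expect this last step to be the main obstacle. The comparison equation $z'=g(t,z)$ need not have unique solutions and $g$ may be unbounded as $t\to0^+$, so naive continuous dependence near the singular endpoint is unavailable, and some assumption of the type \ref{LakshUT_i}) is genuinely needed (for $b(x)=\sqrt{|x|}$ with $\omega\equiv0$ uniqueness fails, and, consistently, \ref{LakshUT_i}) fails for the pair $y\equiv0$, $\bar y_t=t^2/4$). A related minor point, needed above, is keeping the $r_\delta$ in the strip $[0,2a]$ where $g$ is defined: since it suffices to prove uniqueness on a small interval $(0,\tau]$ --- after which $m\equiv0$ there and one repeats the argument with $\tau$ as a new origin, the hypotheses not being anchored at $0$ --- one may first shrink $a$ so that $m$, hence each $r_\delta$, cannot reach $2a$. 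In the applications of this lemma in Section \ref{sec-pos_noise}, $g$ is of linear/Osgood type in its second argument and the whole argument collapses to an elementary Gronwall estimate.
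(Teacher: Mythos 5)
First, a framing remark: the paper does not prove this lemma at all --- it is quoted verbatim from Agarwal and Lakshmikantham \cite[Theorem 2.8.3]{MR1336820} --- so your attempt is being measured against the standard proof of this Kamke-type criterion rather than against anything in the text. Your reduction is fine: $m(t):=|\bar y_t-y_t|$ is continuous, $[0,2a]$-valued, satisfies $D^+m(t)\le g(t,m(t))$ on $(0,T]$ by (\ref{LakshUT_iii}), and $m(t)/B_t\to 0$ by (\ref{LakshUT_i}), and the forward comparison $m\le r_\delta$ on $[\delta,T]$ is correct. The gap is exactly where you locate it, and it is not a removable technicality --- it is the entire content of the theorem. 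Running the comparison \emph{forward} with maximal solutions produces in the limit a solution $z\ge m$ of $z'=g(t,z)$ on $(0,T]$, and hypothesis (\ref{LakshUT_ii}) is only usable if $z_t/B_t\to 0$. But $z$ dominates $m$ from above, so the information $m=o(B)$ transfers in the wrong direction; your only handle is $r_\delta(\delta)=m(\delta)=o(B_\delta)$, and with $g$ merely continuous and non-negative on $(0,T]\times[0,2a]$ --- possibly unbounded as $t\to 0^+$ and with non-unique solutions --- there is no estimate propagating that smallness to $r_\delta(t)/B_t$ for $t>\delta$, let alone through the limit $\delta\to 0$. You would need continuous dependence of maximal solutions at the singular endpoint, which is precisely what is unavailable. (Your aside is also off: for $y\equiv 0$, $\bar y_t=t^2/4$ and $B_t=t^\alpha$ with $\alpha<2$, as used in Section \ref{sec-pos_noise}, condition (\ref{LakshUT_i}) \emph{holds}; when $\omega\equiv 0$ it is (\ref{LakshUT_ii}) that breaks down.)

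The classical proof reverses the direction and thereby gets the $o(B)$ bound for free. Suppose $m(t_1)>0$ for some $t_1\in(0,T]$, and let $z_0$ be the \emph{minimal} solution of $z'=g(t,z)$ through $(t_1,m(t_1))$, continued to the \emph{left}. Since $|m'(t)|\le |f(t,\bar y_t)-f(t,y_t)|\le g(t,m(t))$ wherever $m$ is differentiable (with the corresponding Dini inequalities elsewhere), the left-hand version of the comparison principle gives $0\le z_0(t)\le m(t)$ on the maximal left interval of existence; this squeeze keeps $z_0$ inside the strip, so $z_0$ continues down to $t=0$, extends continuously with $z_0(0)=0$, and inherits $z_0(t)/B_t\le m(t)/B_t\to 0$. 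Then $z_0$ is a nontrivial solution of the comparison equation with vanishing $B$-rate, contradicting (\ref{LakshUT_ii}). If you insist on the forward scheme, you would first have to extract from (\ref{LakshUT_ii}) that the maximal solution of $z'=g(t,z)$ through $(c,0)$ vanishes identically for every $c>0$ and then prove a uniform continuous-dependence statement as $c\to 0$; that is substantially harder than the backward argument and is not supplied by your sketch.
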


Our equation reads $y'_t=f(t,y_t)$, with $f(t,y)=\sqrt{y+\omega_t}$. In this case, moreover, the function $f$ is continuous, and Lakshmikantham
theorem implies the uniqueness of ordinary solutions (i.e.\ of class $C^1$ in $[0,T]$).

Let $y$ and $\bar y$ be the minimal and maximal solutions of (\ref{eq-sqrtw2}), respectively. 
By the mean value theorem applied to 
$f(x):=\sqrt{x}$, we have 
\begin{equation*}
  \sqrt{\bar y_t+\omega_t}-\sqrt{y_t+\omega_t} = 
  \frac{\bar y_t-y_t}{2}\cdot(\omega_t+\xi_t)^{-1/2}\ ,\quad t\in[0,T]\ ,
\end{equation*}
  for some $\xi_t\in[y_t, \bar y_t]$. Since $\xi_t\ge y_t\ge \int_0^t \omega^{1/2}_s\,ds$, we find the bound
\begin{equation}\label{eq-bound_f(t,y)}
  \sqrt{\bar y_t+\omega_t}-\sqrt{y_t+\omega_t} \le \frac{\bar y_t-y_t}{2}\cdot\Big(\omega_t+
  \int_0^t \omega^{1/2}_s\,ds\Big)^{-1/2}\ , \quad t\in[0,T]\ .
  \end{equation}

We also have that (\ref{eq-sqrtw2}) implies
\begin{equation*}
(\bar y_t-y_t)' \le \sqrt{\bar y_t-y_t}\ ,\quad t\in[0,T]\ .
\end{equation*}
Using 
Lakshmikantham and Leela \cite[Theorem 1.4.1]{MR0379933}, the difference $\bar y_t-y_t$ is bounded
by the maximal solution to $z_t=\int_0^t\sqrt{z_s}\,ds$, which is $z_t=t^2/4$.
Now, taking $B(t)=t^{\alpha}$, with any $\alpha\in(0,2)$, 
 hypothesis (\ref{LakshUT_i}) of Lemma \ref{LakshUT} is clearly satisfied.
  
  For conditions (\ref{LakshUT_ii}) and (\ref{LakshUT_iii}), notice that, by (\ref{eq-bound_f(t,y)}), we can take 
\begin{equation*}
g(t,z):=
  \frac{z}{2}\cdot\Big(\omega_t+\int_0^t \omega^{1/2}_s\,ds\Big)^{-1/2},\quad
  t\in(0,T]\ \hbox{\rm and}\ z\in\Reals^+\ , 
\end{equation*}
 assuming the expression on the right makes sense.
 The differential equation
  $z'_t=g(t,z_t)$ is linear, and all its solutions can be explicitly written as 
\begin{equation*} 
  z_t= c\exp\Big\{\frac12 \int_{t_0}^t\Big(\omega_s+\int_0^s \omega^{1/2}_rdr\Big)^{-1/2}\,ds\Big\}
\end{equation*}
  for some constant $c$ and $t_0\in(0,T]$. Then, if $s\mapsto \omega_s+\int_0^s \omega_r\,dr$ 
  is integrable at $0^+$, those solutions can only tend to zero at the origin if $z\equiv 0$. 

 Thus, we have proved the following result.
\begin{theorem}\label{the-nuevo}
Assume that the noise $\omega$ is such that
 $\Big(\omega_\cdot+\int_0^\cdot \omega^{1/2}_s\,ds\Big)^{-1/2}\in L^1([0,T])$.
Then, there exists a unique solution to equation (\ref{eq-sqrtw}).
\end{theorem}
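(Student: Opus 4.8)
The plan is to argue on the equivalent equation \eqref{eq-sqrtw2}, $y_t=\int_0^t\sqrt{|y_s+\omega_s|}\,ds$, and show that it has exactly one solution; translating back through $x=y+\omega$ then gives the statement. Existence on all of $[0,T]$ is Peano's theorem: the right-hand side $f(t,y):=\sqrt{|y+\omega_t|}$ is continuous and of sublinear growth in $y$, so a solution exists and cannot blow up; and, $f$ being continuous, there are a minimal solution $y$ and a maximal solution $\bar y$, with every other solution squeezed between them, so it is enough to prove $y=\bar y$. Recall from the discussion above that each solution is $C^1$, non-negative and non-decreasing, and in particular that the minimal solution — hence every solution — obeys the a priori lower bound $y_t\ge\int_0^t\sqrt{\omega_s}\,ds$.

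Next I would set $\phi_t:=\bar y_t-y_t\ge 0$, which is $C^1$ with $\phi_0=0$, and estimate $\phi'$. By the mean value theorem for $u\mapsto\sqrt u$ there is $\xi_t\in[y_t,\bar y_t]$ with $\phi_t'=\sqrt{\bar y_t+\omega_t}-\sqrt{y_t+\omega_t}=\tfrac12\,\phi_t\,(\omega_t+\xi_t)^{-1/2}$; since $\xi_t\ge y_t\ge\int_0^t\sqrt{\omega_s}\,ds$ this yields, for $t\in(0,T]$,
\[
  \phi_t'\le\tfrac12\,\rho_t\,\phi_t\,,\qquad \rho_t:=\Big(\omega_t+\int_0^t\sqrt{\omega_s}\,ds\Big)^{-1/2}\,,
\]
and $\rho$ is exactly the function assumed integrable on $[0,T]$. (Note that $\rho\in L^1([0,T])$ forces $\omega_t+\int_0^t\sqrt{\omega_s}\,ds>0$ for every $t>0$, so $\rho_t$ is finite there and the estimate is meaningful; it also rules out $\omega\equiv0$ near the origin, the source of the classical non-uniqueness of $x'=\sqrt{|x|}$, $x_0=0$.) A Gronwall argument then closes the proof: $\Psi_t:=\exp\{-\tfrac12\int_0^t\rho_s\,ds\}$ is absolutely continuous and positive with $\Psi_0=1$; from the inequality, $(\phi_t\Psi_t)'\le0$ a.e., so $t\mapsto\phi_t\Psi_t$ is non-increasing and hence $\le\phi_0\Psi_0=0$, forcing $\phi\equiv0$. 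Alternatively one may package this as a verification of the hypotheses of Lakshmikantham's theorem (Lemma \ref{LakshUT}) with $g(t,z)=\tfrac12\,z\,\rho_t$ and $B_t=t^\alpha$, $\alpha\in(0,2)$: condition~(i) follows from the crude estimate $\phi_t'\le\sqrt{\phi_t}$, which by comparison gives $\phi_t\le t^2/4$; condition~(iii) is the bound just derived; and condition~(ii) holds because the solutions $c\exp\{\tfrac12\int_{t_0}^t\rho_s\,ds\}$ of the linear equation $z'=g(t,z)$ tend, by $\rho\in L^1$, to a positive multiple of $c$ as $t\downarrow0$, hence can decay faster than $B_t$ only if $c=0$.

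The only genuinely delicate point is the behaviour near $t=0$: the coefficient $\rho_t$ in the differential inequality blows up as $t\downarrow0$, and the hypothesis $\rho\in L^1([0,T])$ is precisely what lets this singularity be integrated so that the Gronwall/comparison estimate can be carried all the way down to the origin. Two minor matters require care: that the lower bound $y_t\ge\int_0^t\sqrt{\omega_s}\,ds$ holds for \emph{every} solution, so that the estimate is uniform in the pair $(y,\bar y)$; and — if one prefers the route through Lemma \ref{LakshUT} — that its Lipschitz-type condition~(iii) need only be checked on the strip $\{\,y\ge\int_0^t\sqrt{\omega_s}\,ds\,\}$ actually visited by solutions, since $f(t,\cdot)=\sqrt{\cdot+\omega_t}$ is not even Lipschitz near $y=-\omega_t$ and the bound $|f(t,\bar y)-f(t,y)|\le\tfrac12\,\rho_t\,|\bar y-y|$ holds only once $y$ is bounded below by $\int_0^t\sqrt{\omega_s}\,ds$.
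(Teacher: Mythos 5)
Your proposal is correct, and its core estimate is exactly the paper's: the mean value theorem applied to $u\mapsto\sqrt{u}$ together with the a priori lower bound $y_t\ge\int_0^t\omega_s^{1/2}\,ds$ (valid for every solution, since $y\ge 0$), yielding $\phi_t'\le\tfrac12\rho_t\phi_t$ with $\rho_t=\bigl(\omega_t+\int_0^t\omega_s^{1/2}\,ds\bigr)^{-1/2}$. Where you diverge is in how you close: the paper feeds this bound into Lakshmikantham's uniqueness theorem (Lemma \ref{LakshUT}), verifying condition (i) via $\phi_t'\le\sqrt{\phi_t}$ and $\phi_t\le t^2/4$ with $B_t=t^\alpha$, and condition (ii) via the explicit solutions of the linear comparison equation; your primary route instead runs a direct integrating-factor Gronwall argument, observing that $\rho\in L^1([0,T])$ makes $\Psi_t=\exp\{-\tfrac12\int_0^t\rho_s\,ds\}$ absolutely continuous and bounded away from zero down to $t=0$, so that $(\phi\Psi)'\le 0$ and $\phi_0=0$ force $\phi\equiv 0$. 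This is more elementary and self-contained than the paper's argument — it dispenses with the auxiliary function $B$ and the comparison ODE entirely, since integrability of $\rho$ already tames the singularity at the origin — whereas the paper's choice of Lakshmikantham's theorem fits its broader programme of importing classical ODE uniqueness criteria. Your secondary route is precisely the paper's, and your remark that condition (iii) of Lemma \ref{LakshUT} holds only on the region $\{y\ge\int_0^t\omega_s^{1/2}\,ds\}$ actually visited by solutions is a legitimate point of care that the paper passes over silently. Your parenthetical observation that $\rho\in L^1$ forces $\omega_t+\int_0^t\omega_s^{1/2}\,ds>0$ for all $t>0$ (otherwise $\omega$ would vanish identically near the origin and $\rho$ would be non-integrable) is also correct and needed to make the differential inequality meaningful on $(0,T]$.
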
 

As an immediate consequence of this theorem, we recover the result of Proposition \ref{prop-|W_t|} in an easier way:
\begin{corollary}
  Let $W$ be a Brownian Motion, and consider the stochastic differential equation 
\begin{equation}\label{eq-|W_t|2}
  X_t=\int_0^t \sqrt{|X_s|} \, ds + |W_t|\ ,\quad t\in[0,T]\ .
\end{equation}
  Then, equation (\ref{eq-|W_t|2}) has a unique path-by-path solution, for almost all paths of $W$.
\end{corollary}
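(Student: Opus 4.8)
The plan is to apply Theorem~\ref{the-nuevo} with the fixed non-negative continuous noise $\omega_t:=|W_t|$, for which equation~(\ref{eq-sqrtw}) is precisely (\ref{eq-|W_t|2}). Since $b(x)=\sqrt{|x|}$ is continuous, Peano's theorem already provides a solution for every continuous $\omega$, so the whole content of the statement is the \emph{uniqueness}; and by Theorem~\ref{the-nuevo} it suffices to check that, for almost every path of $W$,
\[
  \Big(|W_t|+\int_0^t|W_s|^{1/2}\,ds\Big)^{-1/2}\in L^1([0,T])\ .
\]

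First I would discard the integral term via the trivial bound $|W_t|+\int_0^t|W_s|^{1/2}\,ds\ge|W_t|$ (note that the left-hand side is strictly positive for $t>0$, so the expression does make sense), which reduces everything to showing that $\int_0^T|W_t|^{-1/2}\,dt<\infty$ almost surely. The key point is that $W_t$ is centred Gaussian with variance $t$, so $\E_P\big[|W_t|^{-1/2}\big]=c\,t^{-1/4}$ with $c=\E\big[|Z|^{-1/2}\big]<\infty$ (finite because $|z|^{-1/2}$ is Lebesgue-integrable near the origin and the Gaussian density decays at infinity). Then, applying Tonelli's theorem to the jointly measurable non-negative map $(t,\omega)\mapsto|W_t(\omega)|^{-1/2}$,
\[
  \E_P\Big[\int_0^T|W_t|^{-1/2}\,dt\Big]=c\int_0^T t^{-1/4}\,dt=\tfrac{4}{3}\,c\,T^{3/4}<\infty\ ,
\]
because $1/4<1$. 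Hence $\int_0^T|W_t|^{-1/2}\,dt$, and a fortiori the displayed quantity above, is finite for $P$-almost every $\omega$.

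On the set of such $\omega$ --- which has full measure under the law of $W$, equivalently under the law of $|W|$ --- Theorem~\ref{the-nuevo} yields a unique solution of (\ref{eq-sqrtw}) with $\omega_t=|W_t|$, i.e.\ a unique path-by-path solution of (\ref{eq-|W_t|2}), while Peano provides existence. There is no genuine obstacle here: the only delicate feature is the behaviour near $t=0$, where both $|W_t|$ and the integrand blow up, but the exponent $-1/4$ is integrable, so the estimate closes and the argument is essentially immediate from Theorem~\ref{the-nuevo}. One should also remark that $|W_t|^{-1/2}=+\infty$ exactly on the almost surely Lebesgue-null time set $\{t\in[0,T]:W_t=0\}$, which is harmless both for the integral and for the inequality used above.
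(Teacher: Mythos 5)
Your proof is correct and follows essentially the same route as the paper: the paper also reduces the corollary to the integrability condition of Theorem \ref{the-nuevo} and verifies it by dropping the integral term and using the Gaussian moment computation $\E\big[|W_t|^{-1/2}\big]=c\,t^{-1/4}$ (which it had already recorded in Examples \ref{ex:sqrt} for $b(x)=|x|^{\alpha}$). Your write-up simply makes the Tonelli step and the null set $\{t:W_t=0\}$ explicit.
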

\begin{proof}
  In view of Theorem \ref{the-nuevo}, we only have to show that
  for almost all sample paths $\omega$ of a Brownian motion, 
\begin{equation*}
\Big(|\omega_\cdot|+\int_0^\cdot |\omega_s|^{1/2}\,ds\Big)^{-1/2}\in L^1([0,T])\ ,
\end{equation*}
and this has already been checked in Examples \ref{ex:sqrt}.

%
\end{proof}

\section{Differentiable noise}\label{sec:wdif}
In this section we analyze the uniqueness of a solution to equation (\ref{eq-bw}) for some differentiable  
perturbations. Equivalently, we are dealing with the absolutely continuous solutions to 
\begin{equation}\label{eq:diffnoise}
  \begin{cases}
    x'_t=b(x_t)+w'_t\ ,\quad \text{$t$-a.e. on $[0,T]$}
    \\
    x_0=0 
    \ ,
  \end{cases}
\end{equation}
 where $\omega$ is a function in $C^1([0,T])$, and we want to keep at a minimum the hypotheses on $b$. 

We state first a general result for noises with a strictly negative derivative.
We mimic the proof of Peano's uniqueness theorem (see, for instance, 
\cite[Theorem 1.3.1]{MR1336820}). 

\begin{theorem}\label{the:nne}
Let $\omega$ be a $C^1$ function on $[0,T]$ such that $\omega_0=0$ and with negative derivative
(i.e.\ $\omega^{'}_t<0$ for $t\in[0,T]$). 

Assume that 
\begin{enumerate}[i)]
  \item 
  $b$ is measurable and $\lim_{x\to 0} b(x)=b(0)=0$.
  \item
  For some $\eta>0$, there exists an increasing continuous function $g\colon[0,\eta)\rightarrow\Reals$, 
  of class $C^1$ on $(0,\eta)$, with $g'$ non-increasing, and such that:
\begin{enumerate}
  \item [\emph{H8.}]
  $x\mapsto g'(-x)b(x)$ is non-increasing on $(-\eta,0)$.
\end{enumerate}
  \item
  There exists either a maximal or a minimal solution to (\ref{eq:diffnoise}).
\end{enumerate}

Then, there is a unique local solution to equation (\ref{eq:diffnoise}). Global uniqueness on $[0,T]$
is true if $\eta=\infty$. 
\end{theorem}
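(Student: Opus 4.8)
The plan is to follow the strategy sketched in the statement, namely to imitate the classical proof of Peano's uniqueness theorem. Recall that Peano's argument shows that if $\omega'_t<0$, then any two solutions started at $0$ cannot separate, because at a point where they are equal the drift contribution is the same while the noise pushes both downward in the same way; the differentiable structure of $\omega$ lets one compare derivatives directly, rather than integrated quantities. First I would reduce, by condition (iii), to the case where a maximal solution $\bar x$ exists (the minimal case being symmetric, or reducible by the transformation $x\mapsto -x$, $b\mapsto \tilde b$, $\omega\mapsto-\omega$ as used elsewhere in the paper). Fix any other solution $x$; by maximality $x_t\le\bar x_t$, and I want to show equality on a neighbourhood of $0$, which upgrades to $[0,T]$ when $\eta=\infty$.

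Next I would localise near the origin using hypothesis (i): since $\omega'_0<0$ and $\omega'$ is continuous, $\omega_t<0$ on some interval $(0,\delta]$, and $b(x_t)\to 0$, $b(\bar x_t)\to 0$ as $t\to 0^+$ because $x_t,\bar x_t\to 0$ and $\lim_{x\to 0}b(x)=0$. Hence $x'_t=b(x_t)+\omega'_t<0$ near $0$, so both solutions are negative on $(0,\delta]$ (this is where one sees the solutions live in the region $(-\eta,0)$ where H8 applies, after possibly shrinking $\delta$ so that $x_t,\bar x_t\in(-\eta,0)$). Now set $\phi_t=\bar x_t-x_t\ge 0$ and compute, for $t$-a.e.\ $t$,
\begin{equation*}
  \phi'_t = b(\bar x_t)-b(x_t)\ .
\end{equation*}
The idea is to convert this into a differential inequality in the auxiliary variable $G_t:=g(-x_t)-g(-\bar x_t)\ge 0$ (note $-x_t>-\bar x_t>0$, both in $(0,\eta)$, and $g$ increasing, so $G_t\ge 0$, and $G_t\to g(0^+)-g(0^+)=0$ as $t\to 0^+$ when $g$ is continuous at $0$). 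Differentiating, $G'_t = -g'(-x_t)x'_t + g'(-\bar x_t)\bar x'_t$, and substituting $x'_t=b(x_t)+\omega'_t$, $\bar x'_t=b(\bar x_t)+\omega'_t$, I get
\begin{equation*}
  G'_t = \big[g'(-\bar x_t)b(\bar x_t)-g'(-x_t)b(x_t)\big] + \omega'_t\big[g'(-\bar x_t)-g'(-x_t)\big]\ .
\end{equation*}
By H8 the bracket $g'(-x)b(x)$ is non-increasing on $(-\eta,0)$; since $\bar x_t\ge x_t$ this makes the first term $\le 0$. For the second term, $g'$ is non-increasing on $(0,\eta)$ and $-\bar x_t\le -x_t$, so $g'(-\bar x_t)-g'(-x_t)\ge 0$, and $\omega'_t<0$, so the second term is also $\le 0$. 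Hence $G'_t\le 0$ a.e., and since $G$ is (absolutely) continuous with $G_0=0$ and $G_t\ge 0$, we get $G_t\equiv 0$ on $(0,\delta]$. Because $g$ is strictly increasing (being increasing and $C^1$ with $g'$ non-increasing — one should check $g'>0$, or argue $g$ injective on the relevant range; if $g'$ could vanish one restricts to where it is positive), $G_t=0$ forces $x_t=\bar x_t$ on $(0,\delta]$, giving local uniqueness. For the global statement when $\eta=\infty$, a continuation argument finishes: the set where $x=\bar x$ is closed, nonempty, and the above reasoning applied at any such point (using that the common value is finite and hence in $(-\infty,0)=(-\eta,0)$, provided solutions stay negative) shows it is open in $[0,T]$; one needs that the solutions remain strictly negative on all of $[0,T]$, which follows from $x'_t=b(x_t)+\omega'_t$ together with $b(0)=0$ and $\omega'<0$ (a solution cannot cross back up to $0$).

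The main obstacle I anticipate is the interplay between the regularity of $g$ at the endpoints and the strict monotonicity needed to conclude $x=\bar x$ from $G\equiv 0$: one must ensure $g'$ does not degenerate on the range of the solutions, that $G_t\to 0$ as $t\to 0^+$ (continuity of $g$ at $0$, which is assumed), and that the formal differentiation of $t\mapsto g(-x_t)$ is legitimate — $x$ is only absolutely continuous, and $g$ is only $C^1$ on the open interval with $g'$ possibly blowing up at $0$ (as in $g(x)=2\sqrt{x}$), so one should justify the chain rule for $g\circ(-x)$ away from $t=0$ and handle the integrability of $G'$ near $0$ carefully, arguing on $[\varepsilon,\delta]$ and letting $\varepsilon\downarrow 0$. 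A secondary point is verifying that solutions indeed enter and stay in $(-\eta,0)$; this uses (i) and the sign of $\omega'$, and for the global part requires $\eta=\infty$ precisely so that the argument is not interrupted by the solution leaving the admissible region.
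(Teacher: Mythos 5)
Your proposal is correct and follows essentially the same route as the paper: both pass to the auxiliary quantity $g(-x_t)-g(-\bar x_t)$, show it is non-increasing by using H8 for the drift term and the monotonicity of $g'$ together with $\omega'<0$ for the noise term, then let the left endpoint of integration tend to $0$ and invoke the injectivity of $g$, with the same continuation argument for the case $\eta=\infty$. The technical points you flag (justifying the chain rule on $[\varepsilon,\delta]$ and letting $\varepsilon\downarrow 0$, and checking that solutions enter and remain in $(-\eta,0)$) are precisely how the paper's proof handles them.
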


\begin{proof}
Let $x$ and ${\bar x}$ be two solutions such that $x\le {\bar x}$. 
We first observe that for some $\varepsilon>0$, we have $-\eta/2\le x,{\bar x}<0$ on $(0,\varepsilon)$
due to the continuity of $\omega'$ and $b$ at zero, and to $\omega'_0<0$.

Define $z_t:=g(-x_t)$ and ${\bar z}_t:=g(-{\bar x_t})$ on $[0,\varepsilon)$. Both $z$ and $\bar z$
are absolutely continuous since $g$ is $C^1$ and $x$ and $\bar x$ are absolutely continuous,
and 
\begin{equation*}
  z'_t=-g'(-x_t)(b(x_t)+\omega'_t)  
   \quad \text{and} \quad 
  \bar z'_t=-g'(-\bar x_t)(b(\bar x_t)+\omega'_t)
  \ ,\quad
  \text{$t$-a.e. on $(0,\varepsilon)$}
  \ .
\end{equation*}
The fundamental theorem of calculus gives, for $0<\delta<t<\varepsilon$,
\begin{align*}
  (z_t-\bar z_t)-(z_\delta-\bar z_\delta)
  &=\int_\delta^t \big[-g'(-x_s)b(x_s)+g'(-\bar x_s)b(\bar x_s)\big]\,ds
  +
  \int_\delta^t -\omega'_s\cdot\big(g'(-x_s)-g'(-\bar x_s)\big)\,ds
  \\ 
  &\le
  \int_\delta^t -\omega'_s\cdot\big(g'(-x_s)-g'(-\bar x_s)\big)\,ds 
  \ ,
\end{align*}
since, by H8, the first integral is non-positive.
Letting $\delta\to 0$ and using \emph{ii)}, we obtain $0\le z_t-\bar z_t\le 0$. 
Consequently we have that $z={\bar z}$ on $[0,\varepsilon)$. And, since $g$ is increasing, 
we get $x=\bar x$ on $[0,\varepsilon]$.

Now assume that the function $g$ is defined on $[0,\infty)$, and that uniqueness holds up to 
$t_0<T$. Any solution $x$ will satisfy
\begin{equation*}
x_t=x_{t_0}+\int_{t_0}^t b(x_s)ds+\omega_t-\omega_{t_0}\ ,\quad t\in[t_0,T]\ ,
\end{equation*}
where $x_{t_0}$ is a common value to all of them. Notice that every time $x$ hits the origin, it is differentiable
at that point and its derivative is negative. Therefore, $x_{t_0}\le 0$ and, by continuity, two solutions 
$x$ and $\bar x$ will be negative in some interval $(t_0, t_0+\varepsilon)$. 
We can proceed again
as in the beginning of the proof to extend uniqueness beyond $t_0$.
\end{proof}

\begin{remarks}\hspace{1pt}
  \begin{enumerate}[1.] 
    \item 
    A well known sufficient condition for the existence of maximal and minimal solutions is 
    the continuity of $b$.
    But weaker conditions exists in the literature. For instance, in the situation given, this is true if:
    \begin{enumerate}
      \item $b$ has linear growth, and 
      \item $\limsup_{y\to x^-} b(y) \le b(x) \le \liminf_{y\to x^+} b(y)$, for all $x$.
    \end{enumerate}
    These conditions follow easily from the general Theorem 3.1 in \cite{MR1689280}. Even weaker conditions, 
    allowing jumps in the ``wrong direction'', can be found
    in \cite{MR1868338} and \cite{MR2525582}.
    \item
    The continuity of $b$ at zero can be replaced by other conditions ensuring that the solutions remain
    negative. For example, if 
    \begin{enumerate}
      \item $\limsup_{x\to 0} b(x)\le 0$, or 
      \item There is a non-decreasing continuous function $f$ such that $b\le f$ on an open interval containing 0 and
      $f(0)+\omega'_0<0$. \\
      In this case, 
      \begin{equation*}
        x_t=\int_0^t \big(b(x_s)+\omega'_s\big)\, ds
        \le
        \int_0^t \big(f(x_s)+\omega'_s\big)\, ds
      \end{equation*}
      and $x$ is bounded by the maximal solution of 
      \begin{equation*}
        u_t=\int_0^t \big(f(u_s)+\omega'_s\big)\, ds
      \end{equation*}
      (see Pachpatte \cite[Theorem 2.2.4]{MR1487077}), which is negative on an interval $(0,\eta)$.
    \end{enumerate}  
    \item An example where the above remarks apply is given by
    \begin{equation*}
      b(x)=\begin{cases}
             \sqrt{x}, & \text{if $x\ge 0$} 
             \\ 
             \sqrt{-x}-1, & \text{if $x<0$} \ .
           \end{cases}
    \end{equation*}
    A maximal solution exists by the sufficient conditions of statement 1. Then both (a) or (b) of statement 2
   are applicable with $f(x)=\sqrt{x}\cdot\ind_{\{x\ge 0\}}$ in the second case.
  \end{enumerate}  
\end{remarks}

The following result is also inspired in the proof of Peano's uniqueness theorem.
We consider a particular example of an ordinary differential equation driven by 
a differentiable
noise, positive in a neighbourhood of zero, but changing sign afterwards. By ``piecewise Lipschitz'' below
we mean a function whose domain can be partitioned into intervals such that their interior is non-empty and the
function is Lipschitz on each of them.
\begin{example}\label{ex-posnoise}
Consider $\omega_t=\alpha t+t^{2+\beta}\sin(t^{-1})$,
where $\alpha,\,\beta>0$, $t\in(0,T]$ and $\omega_0=0$. 

Assume:
\begin{enumerate}[i)]
  \item $b$ is measurable and $\lim_{x\to 0} b(x)=b(0)=0$.
  \item For some $\eta>0$, there exists an increasing continuous function 
  $h\colon [0,\eta)\rightarrow \Reals$, of class $C^1$ on $(0,\eta)$, with $h'$ non-increasing, 
and such that:
  \begin{enumerate}
    \item [\emph{H9.}]
    $x\mapsto h'(x)b(x)$ is non-increasing on $(0,\eta)$.
  \end{enumerate}
  \item There exists either a maximal or a minimal solution to (\ref{eq:diffnoise}).
\end{enumerate}

Then, there is a unique local solution to equation (\ref{eq:diffnoise}). Global uniqueness on $[0,T]$
is true if, furthermore, 
\begin{enumerate}[i')]
  \item $b$ is non-negative, either piecewise Lipschitz or locally Lipschitz on $(-\infty,0)$, and locally Lipschitz on $(0,\infty)$. 
  \item $\eta=\infty$.
  \item[iv)] 
    There exists $g\colon [0,\infty)\rightarrow \Reals$ satisfying assumption ii) of Theorem \ref{the:nne}, 
\end{enumerate}

Then, 
equation (\ref{eq:diffnoise}) has a unique solution.
\end{example}
\begin{proof}
If $x$ is a solution to (\ref{eq:diffnoise}) with the given noise $\omega$, we can see, 
as in the preceding theorem, that there is an $\varepsilon>0$ 
such that $0<x<\eta/2$ and $\omega'>0$ on $(0,\varepsilon)$. Given any two such solutions with 
$x\le \bar x$, define $z_t:=h(x_t)$ and $\bar z_t:=h(\bar x_t)$, on $[0,\varepsilon)$. 

Hence, proceeding as in the proof of Theorem \ref{the:nne} but using hypothesis H9 instead of H8, we obtain that
\begin{equation*}
  0\le \bar z_t-z_t\le \int_0^t \omega'_s\cdot \big(h'(\bar x_s)-h'(x_s)\big)\,ds \le 0\ , 
\end{equation*}
and therefore
equation (\ref{eq:diffnoise}) with the given $\omega$ has a unique solution on $[0,\varepsilon]$.

For the second part, assume that uniqueness holds up to $t_0<T$.
We distinguish the following cases:

\begin{description}
  \item [{\sc Case 1:}  \textmd{$x_{t_0}>0$}].

  We only need to use that $b$ is locally Lipschitz to extend the uniqueness to the right of $t_0$.
        
  \item [{\sc Case 2:}  \textmd{$x_{t_0}\le 0$, $\omega'_{t_0}<0$}].
    
  Here we use condition H8, and we finish as in Theorem \ref{the:nne}.

  \item [{\sc Case 3:}  \textmd{$x_{t_0}\le 0$, $\omega'_{t_0}\ge 0$}].    
   
We can write
\begin{align}\label{dobder}
\omega^{''}_{t_0}&=\frac{1+\beta}{t_0}\omega'_{t_0}
-(\beta+1)\frac{\alpha}{t_0}-(\beta+1)t_0^{\beta-1}\cos(t_0^{-1})
-t_0^{\beta-2}\sin(t_0^{-1})\nonumber \\
&\ge -(\beta+1)\frac{\alpha}{t_0}-(\beta+1)t_0^{\beta-1}\cos(t_0^{-1})
-t_0^{\beta-2}\sin(t_0^{-1})\nonumber \\
&\ge -(\beta+1)\frac{\alpha}{t_0}+(\beta+1)\left[\frac{-\alpha}{t_0}
-(\beta+2)t^{\beta}_0\sin(t_0^{-1})\right]-t_0^{\beta-2}\sin(t_0^{-1})\ .
\end{align}
On the other hand, the facts that $x_{t_0}\le 0$ and $b$ is non-negative imply 
$\omega_{t_0}\le 0$. Thus,
$-t_0^{\beta+2}\sin(t_0^{-1})\ge\alpha t_0>0$, which, together with
(\ref{dobder}), yields
\begin{align*}
\omega^{''}_{t_0}&>(\beta+1)\left[-2\frac{\alpha}{t_0}-(\beta+2)t^{\beta}_0
\sin(t_0^{-1})\right]\nonumber\\
&\ge -(\beta^2+\beta)t_0^{\beta}\sin(t_0^{-1})>0\ .
\end{align*}

Therefore, there exists $\varepsilon>0$ such that
$\omega^{'}>0$ on $(t_0,t_0+\varepsilon)$. If $x_{t_0}=0$, we can proceed as in 
the beginning of this proof;
if $x_{t_0}<0$, and since $b$ is non-negative,   
    we have $x'>0$ on 
   $(t_0,t_0+\varepsilon)$. Therefore $x_t$ is increasing and the 
   piecewise Lipschitz property of $b$ on $(-\infty,0)$ gives the uniqueness beyond
   $t_0$, even if $b$ is discontinuous at $t_0$.
\end{description}
\end{proof}

Hypotheses H8 and H9 in Theorem \ref{the:nne} and Example \ref{ex-posnoise} are satisfied by functions of the 
form 
\begin{equation*}
  b(x)=
  \begin{cases}
    r_1(x)\cdot s_1(x)\ ,\quad x\ge 0
    \\
    r_2(x)\cdot s_2(-x)\ ,\quad x<0 \ ,
  \end{cases}
\end{equation*}
where 
$r_1$ and $r_2$ are non-negative and non-increasing, with $r_2$ piecewise locally Lipschitz, 
and $s_1$ and $s_2$ are positive, non-decreasing, continuous on $[0,\infty)$, 
with $1/s_1$ and $1/s_2$ integrable at zero. One can take $h(x)=\int_0^x 1/s_1$, and 
$g(x)=\int_0^x 1/s_2$. In particular, this family includes the non-Lipschitz functions 
$b(x)=|x|^{\alpha}$ ($0<\alpha<1$),
and, more generally, $b(x)=r(x)\cdot|x|^{\alpha}$, with convenient $r$; it suffices to take 
$g(x)=h(x)=\frac{1}{1-\alpha}x^{1-\alpha}$.

\section{Acknowledgments}

This work was partially supported by grants numbers MTM2011-29064-C03-01  
                               from the Ministry of Economy and Competitiveness of Spain, and 
                               UNAB10-4E-378, co-funded by the European Regional Development Fund,
                               and by the CONACyT grant 220303. 
                               
Both authors are thankful for the hospitality and economical support of the                    
Departamento de Control Automático of CINVESTAV-IPN, Mexico City.

\bibliographystyle{plain}
\bibliography{UniquenessSingularSDE}
\end{document}